\numberwithin{equation}{section}
\def\Ext{\mbox{\rm Ext}\,} \def\Hom{\mbox{\rm Hom}} \def\dim{\mbox{\rm dim}\,} \def\Iso{\mbox{\rm Iso}}
\def\lr#1{\langle #1\rangle}    
   \def\im{\mbox{\rm Im}\,}
\def\Aut{\mbox{\rm Aut}\,}\def\A{\mathcal{A}\,} 
\def\cone{\mbox{\rm cone}\,}
\theoremstyle{plain}
\newtheorem{theorem}{\bf Theorem}[section]
\newtheorem{lemma}[theorem]{\bf Lemma}
\newtheorem{corollary}[theorem]{\bf Corollary}
\newtheorem{proposition}[theorem]{\bf Proposition}
\theoremstyle{definition}
\newtheorem{definition}[theorem]{\bf Definition}
\newtheorem{remark}[theorem]{\bf Remark}
\newtheorem{example}[theorem]{\bf Example}
\newcommand{\bt}{\begin{theorem}}
\newcommand{\et}{\end{theorem}}
\newcommand{\bl}{\begin{lemma}}
\newcommand{\el}{\end{lemma}}
\newcommand{\bd}{\begin{definition}}
\newcommand{\ed}{\end{definition}}
\newcommand{\bc}{\begin{corollary}}
\newcommand{\ec}{\end{corollary}}
\newcommand{\bp}{\begin{proof}}
\newcommand{\ep}{\end{proof}}
\newcommand{\bx}{\begin{example}}
\newcommand{\ex}{\end{example}}
\newcommand{\br}{\begin{remark}}
\newcommand{\er}{\end{remark}}
\newcommand{\be}{\begin{equation}}
\newcommand{\ee}{\end{equation}}
\newcommand{\ba}{\begin{align}}
\newcommand{\ea}{\end{align}}
\newcommand{\bn}{\begin{enumerate}}
\newcommand{\en}{\end{enumerate}}
\newcommand{\bcs}{\begin{cases}}
\newcommand{\ecs}{\end{cases}}
\renewcommand{\section}{\@startsection{section}{1}{0mm}
  {-\baselineskip}{0.5\baselineskip}{\bf\leftline}}
\begin{document}

\title[Hall algebras associated to root categories]{Hall algebras associated to root categories}
\author{Haicheng Zhang}
\address{Institute of Mathematics, School of Mathematical Sciences, Nanjing Normal University,
 Nanjing 210023, P. R. China.\endgraf}
\email{zhanghc@njnu.edu.cn}
\subjclass[2010]{17B37, 18E10, 16E60.}
\keywords{Derived Hall algebra; Root category; Drinfeld double; Quantum group.}

\begin{abstract}
Let $\mathcal {A}$ be a finitary hereditary abelian category. We define a Hall algebra for the root category of $\mathcal {A}$ by applying the derived Hall numbers of the bounded derived category $D^b(\mathcal {A})$, which is proved to be isomorphic to the Drinfeld double Hall algebra of $\mathcal {A}$. In the appendix, we also define the 1-periodic derived Hall algebra via the derived Hall numbers of $D^b(\mathcal {A})$.
\end{abstract}

\maketitle

\section{Introduction}

The Hall algebra of a finite dimensional algebra over a finite field was introduced by Ringel \cite{R90} in 1990. Ringel \cite{R90a} proved that the Ringel--Hall algebra of a representation finite hereditary algebra is isomorphic to the positive part of the corresponding quantum enveloping algebra. Green \cite{Gr95} gave a bialgebra structure on the Ringel--Hall algebra $\mathfrak{H}_v(A)$ of any hereditary algebra $A$ and showed that the composition subalgebra of $\mathfrak{H}_{v}(A)$ generated by simple $A$-modules provides a realization of the positive part of the corresponding quantum enveloping algebra. Afterwards, Xiao \cite{Xiao} obtained the antipode for $\mathfrak{H}_v(A)$, and provided a realization of the whole quantum enveloping algebra by constructing the Drinfeld double of the extended Ringel--Hall algebra of $A$.

At the very start, in order to give an intrinsic realization of the entire quantum enveloping algebra via Hall algebra approach, one tried to define Hall algebras for triangulated categories. In 2006, To\"en \cite{Toen} defined a Hall algebra, called derived Hall algebra, for a differential graded category satisfying some finiteness conditions.
Later on, Xiao and Xu \cite{XiaoXu} generalised To\"en's construction to any triangulated category satisfying certain homological finiteness conditions, which can be the bounded derived category of a finitary hereditary abelian category $\A$. However, the root category of $\A$ does not satisfy the homological finiteness conditions (cf. \cite{XiaoXu}). In other words, the Hall algebras of root categories have not been defined. In fact, more generally, the Hall algebras of odd periodic triangulated categories with some finiteness conditions have been defined in \cite{XuChen}. Nevertheless, the Hall algebras of even periodic triangulated categories still have no definitions. Some experts expect to give a realization of the whole quantum enveloping algebra via a certain Hall algebra of a root category, for example, see the introductions of \cite{Kapranov,Gor2}.

Instead of the Hall algebras of root categories, Bridgeland \cite{Br} considered the Ringel--Hall algebra of the category of $2$-periodic complexes of projective modules over a hereditary algebra $A$. By taking some localizations and reductions with respect to contractible complexes, he obtained a realization of the entire quantum enveloping algebra via such localized Hall algebra, called Bridgeland's Hall algebra of $A$, which was proved by Yanagida \cite{Yan} to be isomorphic to the Drinfeld double Hall algebra of $A$. This provides a beautiful and intrinsic realization of the entire quantum enveloping algebra via Hall algebra approach.
Inspired by the work of Bridgeland, Chen and Deng \cite{ChenD} introduced Bridgeland's
Hall algebras of $m$-periodic complexes of hereditary algebras for any positive integer $m$.

In order to generalise Bridgeland's construction to any hereditary abelian category which may not have enough projectives, inspired by the work of Gorsky \cite{Gorsky}, Lu and Peng \cite{LP} introduced the semi-derived Ringel--Hall algebra $\mathcal {S}\mathcal {D}\mathcal {H}_{\mathbb{Z}_2}(\A)$ of any hereditary abelian category $\A$, which is the localization of a certain quotient algebra of the Ringel--Hall algebra of the category of $2$-periodic complexes over $\A$. They also proved that the achieved algebra $\mathcal {S}\mathcal {D}\mathcal {H}_{\mathbb{Z}_2}(\A)$ is isomorphic to the Drinfeld double Hall algebra of $\A$.

As mentioned above, the derived Hall algebra of the bounded derived category $D^b(\A)$ of a finitary hereditary abelian category $\A$ has been defined. In this paper, by analysing the relations between the extensions in $D^b(\A)$ and the root category $\mathcal {R}(\A)$ of $\A$, we define a Hall algebra $\mathcal {D}\mathcal {H}_2(\A)$ for the root category $\mathcal {R}(\A)$
by applying the (dual) derived Hall numbers of $D^b(\A)$. In fact, in order to get a realization of the whole quantum enveloping algebra via the Hall algebra of the root category, we have appended the $K$-elements in the basis elements of the Hall algebra $\mathcal {D}\mathcal {H}_2(\A)$. The main results of this paper are to prove that $\mathcal {D}\mathcal {H}_2(\A)$ is an associative algebra and is isomorphic to the Drinfeld double Hall algebra of $\A$. The main tools used in proving these main results are Green's formula on Ringel--Hall numbers and the associativity formula of Hall algebras.
Compared with Bridgeland's Hall algebra and the semi-derived Ringel--Hall algebra, the algebra $\mathcal {D}\mathcal {H}_2(\A)$ does not involve quotients or localizations of algebras, and its basis is straightforward and structure constants are counting triangles in some sense. Even so, we still think that the constructions of Bridgeland's Hall algebra and the semi-derived Ringel-Hall algebra are innovative and useful.

The paper is organized as follows: we recall the definitions and properties of Ringel--Hall algebra and derived Hall algebra of a hereditary abelian category $\A$ in Section 2. In Section 3, we define a Hall algebra for the root category of $\A$ and prove its associativity. Section 4 is devoted to showing that the defined Hall algebra is isomorphic to the Drinfeld double Hall algebra of $\A$. In Section 5, as an appendix, we also define a Hall algebra for the $1$-periodic derived category of $\A$.

Throughout the paper, $k$ is a finite field with $q$ elements and $v=\sqrt{q}\in\mathbb{C}$, $\A$ is an essentially small hereditary abelian $k$-category, and $D^b(\mathcal {A})$ is the bounded derived category of $\A$ with the shift functor $[1]$. We always assume that $\A$ is finitary, i.e. for any objects $M,N\in\A$, the spaces $\Hom_{\A}(M,N)$ and $\Ext^1_{\A}(M,N)$ are both finite dimensional. Let $K(\mathcal{A})$ be the Grothendieck group of $\A$, we denote by $\hat{M}$ the image of $M$ in $K(\mathcal{A})$ for any $M\in\A$. For a finite set $S$, we denote by $|S|$ its cardinality. For an essentially small finitary category $\mathcal {E}$, we denote by ${\rm Iso}(\mathcal {E})$ the set of isomorphism classes $[X]$ of objects $X$ in $\mathcal {E}$; for each object $X\in\mathcal {E}$, denote by $\Aut X$ the automorphism group of $X$, and write $a_X$ for $|\Aut X|$. All tensor products are considered over the complex number field $\mathbb{C}$.

\section{Preliminaries}
In this section, we recall the definitions of the Ringel--Hall algebra, Drinfeld double Hall algebra and derived Hall algebra of the hereditary abelian category $\A$, and give some formulas on the (derived) Hall numbers.
\subsection{Ringel--Hall algebras}
Given objects $L,M,N \in \mathcal{A}$, we denote by $\Ext_\mathcal{A}^1(M,N)_L$ the subset of $\Ext_\mathcal{A}^1(M,N)$, which consists of the equivalence classes of short exact sequences with middle term $L$.
\begin{definition}
The \emph{Hall algebra} $\mathcal {H}(\mathcal{A})$ of $\mathcal{A}$ is the $\mathbb{C}$-vector space with the basis $\{u_{[M]}~|~[M]\in \Iso(\mathcal{A}$)\}, and with the multiplication defined by
\begin{equation}\label{jhallm}
u_{[M]} \diamond u_{[N]} = \sum\limits_{[L] \in {\rm Iso}(\mathcal{A})} {\frac{{|\Ext_\mathcal{A}^1{{(M,N)}_L}|}}{{|\Hom_\mathcal{A}(M,N)|}}} u_{[L]}.\end{equation}
\end{definition}
By \cite{R90a}, the above operation $\diamond$ defines on $\mathcal {H}(\mathcal{A})$ the structure of a unital associative algebra, and the basis element $u_{[0]}$ is the unit.
\begin{remark}
Given objects $L,M,N\in \A$, let
$g_{M,N}^L$ be the number of subobjects $X$ of $L$ such that $X\cong N$ and $L/X\cong M$.
The Riedtmann--Peng formula (cf. \cite{Riedtmann,Peng}) states that
$$g_{M,N}^{L}=\frac{|\Ext^1_{\A}(M,N)_{L}|}{|\Hom_{\A}(M,N)|}\cdot \frac{a_{L}}{a_{M}a_{N}}.$$ Thus in terms of alternative generators $\mu_{[M]}=\frac{1}{a_M}u_{[M]}$, the product (\ref{jhallm}) takes the form
$$\mu_{[M]}\diamond \mu_{[N]}= \sum\limits_{[L] \in {\rm Iso}(\mathcal{A})}g_{M,N}^L\mu_{[L]},$$
which is the definition used, for example, in \cite{R90a,Sc}.
\end{remark}

Given objects $M,N \in \mathcal{A}$, set $$\lr{M,N}:=\dim_k\Hom_{\A}(M,N)-\dim_k\Ext^1_{\A}(M,N),$$
and it descends to give a bilinear form
$$\lr{\cdot ,\cdot }: K(\mathcal{A})\times K(\mathcal{A})\longrightarrow \mathbb{Z},$$ known as the \emph{Euler form}. We also consider the \emph{symmetric Euler form}
$$(\cdot ,\cdot ): K(\mathcal{A})\times K(\mathcal{A})\longrightarrow \mathbb{Z},$$ defined by $(\alpha,\beta)=\lr{\alpha,\beta}+\lr{\beta,\alpha}$ for all $\alpha,\beta \in K(\mathcal{A})$.
The \emph{Ringel--Hall algebra} ${\mathcal {H}}_{\rm{tw}}(\mathcal{A})$ of $\mathcal{A}$ is the same vector space as $\mathcal {H}(\mathcal{A})$, but with the multiplication defined by $$u_{[M]}u_{[N]}=v^{\lr{\hat{M},\hat{N}}}\cdot u_{[M]}\diamond u_{[N]}.$$
The \emph{extended Ringel--Hall algebra} ${\mathcal {H}}_{\rm{tw}}^{\rm e}(\mathcal{A})$ of $\A$ is defined as an extension of ${\mathcal {H}}_{\rm{tw}}(\mathcal{A})$ by appending elements $K_{\alpha}$ for all $\alpha\in K(\A)$, and imposing relations $$K_{\alpha}K_{\beta}=K_{\alpha+\beta},\quad K_{\alpha}u_{[M]}=v^{(\alpha,\hat{M})}u_{[M]}K_{\alpha},$$ for $\alpha,\beta\in K(\A)$ and $[M]\in \Iso(\mathcal{A})$.

By Green \cite{Gr95} and Xiao \cite{Xiao}, the extended Ringel--Hall algebra ${\mathcal {H}}_{\rm{tw}}^{\rm e}(\mathcal{A})$
has a {topological} (cf. \cite{Sc}) bialgebra structure with the comultiplication $\Delta:{\mathcal {H}}_{\rm{tw}}^{\rm e}(\mathcal{A})\rightarrow {\mathcal {H}}_{\rm{tw}}^{\rm e}(\mathcal{A})\widehat{\otimes} {\mathcal {H}}_{\rm{tw}}^{\rm e}(\mathcal{A})$ and counit $\epsilon:{\mathcal {H}}_{\rm{tw}}^{\rm e}(\mathcal{A})\rightarrow\mathbb{C}$ defined by
$$\Delta(u_{[L]}K_{\alpha})=\sum\limits_{[M],[N] \in {\rm Iso}(\mathcal{A})}v^{\lr{\hat{M},\hat{N}}}g_{M,N}^Lu_{[M]}K_{\alpha+\hat{N}}\otimes u_{[N]}K_{\alpha}\quad\mbox{and}\quad \epsilon(u_{[L]}K_{\alpha})=\delta_{[L],[0]}.$$
Here the word {\em topological} means that the tensor product $\widehat{\otimes}$ is a {\em completed tensor product}, namely, ${\mathcal {H}}_{\rm{tw}}^{\rm e}(\mathcal{A})\widehat{\otimes} {\mathcal {H}}_{\rm{tw}}^{\rm e}(\mathcal{A})$ is the space of all formal (possibly infinite) linear combinations of elements $a\otimes b$ with $a,b\in{\mathcal {H}}_{\rm{tw}}^{\rm e}(\mathcal{A})$. If $\A$ is the category of finite dimensional nilpotent representations of a finite quiver, ${\mathcal {H}}_{\rm{tw}}^{\rm e}(\mathcal{A})$ is a genuine bialgebra. That $\Delta$ is a homomorphism of algebras amounts to the following crucial formula given in \cite{Gr95}.
\begin{theorem}{\rm(\textbf{Green's formula})}
Given objects $M,N,M',N'\in\A$, we have the following formula
\begin{equation*}
\begin{split}
&a_Ma_Na_{M'}a_{N'}\sum\limits_{[L]\in{\rm Iso}(\A)}g_{M,N}^Lg_{M',N'}^L\frac{1}{a_L}\\&=
\sum\limits_{[A],[A'],[B],[B']\in{\rm Iso}(\A)}q^{-\lr{\hat{A},\hat{B}'}}
g_{A,A'}^{M}g_{B,B'}^{N}g_{A,B}^{M'}g_{A',B'}^{N'}a_{A}a_{A'}a_{B}a_{B'}.
\end{split}
\end{equation*}
\end{theorem}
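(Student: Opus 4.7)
My plan is to interpret both sides of Green's formula as weighted counts of configurations of short exact sequences in $\A$ and to match them via the nine-lemma. Using the Riedtmann--Peng identity $g_{M,N}^L=\frac{|\Ext^1_\A(M,N)_L|}{|\Hom_\A(M,N)|}\cdot\frac{a_L}{a_M a_N}$, I would first rewrite the LHS --- the prefactor $a_M a_N a_{M'} a_{N'}$ exactly clears the denominators introduced by the Hall numbers --- as a weighted count, with weight $1/a_L$, of pairs
$$\eta:\,0\to N\to L\to M\to 0,\qquad \eta':\,0\to N'\to L\to M'\to 0$$
of short exact sequences sharing a common middle term $L$.

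Next, to every such pair I would attach the canonical $3\times 3$ commutative diagram with exact rows and columns whose middle row is $\eta$ and whose middle column is $\eta'$; its four corner objects are
$$B':=N\cap N',\quad B:=N/(N\cap N'),\quad A':=N'/(N\cap N'),\quad A:=L/(N+N'),$$
and its four outer rows/columns are short exact sequences counted by $g_{A',B'}^{N'}$, $g_{A,B}^{M'}$, $g_{B,B'}^{N}$ and $g_{A,A'}^{M}$ respectively. Since conversely every such nine-term diagram recovers $(\eta,\eta')$ as its middle row and middle column, the LHS reorganizes as a sum over the iso classes $[A],[A'],[B],[B']$ of counts of nine-term diagrams with prescribed corners and prescribed middle-row/column types.

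The remaining (and crucial) step is a fiber-counting computation: for fixed iso classes of all nine objects and fixed outer SES's, I would count the number of ways to fill in the middle row and column to a full nine-term diagram. Applying $\Hom_\A(A,-)$ to the outer SES $0\to B'\to N'\to A'\to 0$ (and dually $\Hom_\A(-,B')$ to $0\to B\to M'\to A\to 0$) yields six-term long exact sequences whose alternating products of cardinalities, once combined with the groupoid-cardinality weights arising from $a_A a_{A'} a_B a_{B'}$ and $a_L/(a_M a_N a_{M'} a_{N'})$, collapse to the single Euler-form factor $q^{-\lr{\hat A,\hat{B'}}}$. Summing over $[A],[A'],[B],[B']$ then produces the RHS. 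The main obstacle is precisely this collapse: the delicate cancellation between $\Hom$- and $\Ext^1$-contributions to the fiber count --- so that the answer depends only on $\lr{\hat A,\hat{B'}}$ rather than on $\dim\Hom_\A(A,B')$ and $\dim\Ext^1_\A(A,B')$ separately --- is the technical heart of Green's original groupoid argument, and requires meticulous bookkeeping of the torsor structures on all the $\Ext^1$-sets involved.
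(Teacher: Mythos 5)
The paper does not actually prove this statement: it is quoted from Green \cite{Gr95} (``the following crucial formula given in \cite{Gr95}''), so there is no in-paper argument to compare against. Your outline reproduces the architecture of Green's original proof, and the parts you make explicit are correct. Writing $W(M,N;L)$ for the set of exact sequences $0\to N\to L\to M\to 0$ with fixed middle term $L$, one has $|W(M,N;L)|=g_{M,N}^{L}a_{M}a_{N}=|\Ext^1_{\A}(M,N)_{L}|\cdot a_{L}/|\Hom_{\A}(M,N)|$, so the left-hand side is exactly $\sum_{[L]}|W(M,N;L)|\,|W(M',N';L)|/a_{L}$, as you claim; and your identification of the four corners $B'=N\cap N'$, $B=N/(N\cap N')$, $A'=N'/(N\cap N')$, $A=L/(N+N')$, together with which Hall number counts which outer row or column of the $3\times3$ diagram, is consistent with the placement of $g_{A,A'}^{M}$, $g_{B,B'}^{N}$, $g_{A,B}^{M'}$, $g_{A',B'}^{N'}$ in the formula.

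However, the proposal stops exactly where the theorem begins. The entire content of Green's formula is the fiber-counting lemma you defer: for a fixed frame (the four outer short exact sequences), the number of completions to a full $3\times3$ diagram --- equivalently, the weighted number of pairs $(\eta,\eta')$ inducing that frame --- must be shown to equal $q^{-\lr{\hat{A},\hat{B}'}}$ times the expected product of automorphism factors. You assert that the long exact sequences obtained from $\Hom_{\A}(A,-)$ and $\Hom_{\A}(-,B')$ ``collapse'' to this factor, but you do not exhibit the torsor structures and free actions on the relevant extension sets, nor the cancellation $|\Hom_{\A}(A,B')|/|\Ext^1_{\A}(A,B')|=q^{\lr{\hat{A},\hat{B}'}}$ that produces a factor depending only on the Euler form; and you do not indicate where the hereditary hypothesis ($\Ext^2_{\A}=0$) enters, although without it the fiber count --- and hence the formula --- fails. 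As written, this is a correct road map to Green's argument rather than a proof: the one step you label as the technical heart is precisely the step that remains to be done.
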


It is well known that there exists a nondegenerate symmetric bilinear
$$\varphi(-,-): {\mathcal {H}}_{\rm{tw}}^{\rm e}(\mathcal{A})\times{\mathcal {H}}_{\rm{tw}}^{\rm e}(\mathcal{A})\longrightarrow
\mathbb{C}$$ defined by
$$\varphi(u_{[M]}K_{\alpha},u_{[N]}K_{\beta})=\delta_{[M],[N]}a_{M}v^{(\alpha,\beta)},$$ which is a Hopf pairing (cf. \cite{Gr95,Sc,Xiao}).
Then the {\em Drinfeld double Hall algebra} $\mathcal {D}(\A)$ of $\A$ is by definition the free product ${\mathcal {H}}_{\rm{tw}}^{\rm e}(\mathcal{A})\ast{\mathcal {H}}_{\rm{tw}}^{\rm e}(\mathcal{A})$ (cf. \cite{Xiao,Van,Yan,LP,XuZhang}) divided out by the commutator relations
(with $a,b\in{\mathcal {H}}_{\rm{tw}}^{\rm e}(\mathcal{A})$) \begin{equation}\label{Drinfeld}\sum\varphi(a_{(2)},b_{(1)})\cdot a_{(1)}\otimes b_{(2)}=\sum\varphi(a_{(1)},b_{(2)})(1\otimes b_{(1)}) (a_{(2)}\otimes1).\end{equation} Here we use Sweedler's notation: $\Delta(a)=\sum a_{(1)}\otimes a_{(2)}$.

\subsection{Derived Hall algebras}
Given objects $M,N,X\in D^b(\A)$, set $$\{M,N\}:=\prod\limits_{i>0}|\Hom_{D^b(\A)}(M[i],N)|^{(-1)^i}.$$ In a triangulated category $\mathcal {T}$, we denote by $\Hom_{\mathcal {T}}(M,N)_X$ the subset of $\Hom_{\mathcal {T}}(M,N)$ consisting of the morphisms $M\to N$ whose cone is isomorphic to $X$. According to \cite{Toen,XiaoXu}, for any objects $X,Y,L\in D^b(\A)$, we have that
$$\frac{|\Hom_{D^b(\A)}(L,X)_{Y[1]}|}{a_X}\cdot\frac{\{L,X\}}{\{X,X\}}=
\frac{|\Hom_{D^b(\A)}(Y,L)_{X}|}{a_Y}\cdot\frac{\{Y,L\}}{\{Y,Y\}}=:F_{X,Y}^L,$$ which is called \emph{To\"en's formula}. It is easy to see that for any $X,Y,L\in\A$ we have that $F_{X,Y}^L=g_{X,Y}^L$.

\begin{definition}
The \emph{derived Hall algebra} $\mathcal {D}\mathcal {H}(\A)$ of $\A$ is the $\mathbb{C}$-vector space with the basis $\{\mu_{[X]}~|~[X]\in {\rm Iso}(D^b(\A))\}$, and with the multiplication defined by
\begin{equation}\label{dhallm}
\mu_{[X]} \mu_{[Y]}=\sum\limits_{[L]\in {\rm Iso}(D^b(\A))}F_{X,Y}^L\mu_{[L]}.\end{equation}
\end{definition}

By \cite{Toen,XiaoXu}, we know that $\mathcal {D}\mathcal {H}(\A)$ is an associative and unital algebra. The derived Riedtmann--Peng formula (cf. \cite{XiaoXu2,WWZ}) states that
$$F_{X,Y}^L=\frac{|\Ext^1_{D^b(\A)}(X,Y)_{L}|}{|\Hom_{D^b(\A)}(X,Y)|}\cdot\frac{1}{\{X,Y\}}\cdot\frac{a_L}{a_X a_Y}\cdot\frac{\{L,L\}}{\{X,X\}\{Y,Y\}},$$
where $\Ext^1_{D^b(\A)}(X,Y)_{L}:=\Hom_{D^b(\A)}(X,Y[1])_{L[1]}$. Thus in terms of alternative generators $u_{[X]}=\{X,X\}\cdot a_X\mu_{[X]}$, the product (\ref{dhallm}) takes the form
$$u_{[X]} u_{[Y]}=\sum\limits_{[L]\in {\rm Iso}(D^b(\A))}H_{X,Y}^Lu_{[L]},$$ where
$$H_{X,Y}^L=\frac{|\Ext^1_{D^b(\A)}(X,Y)_{L}|}{|\Hom_{D^b(\A)}(X,Y)|}\cdot\frac{1}{\{X,Y\}}.$$

Given objects $X_1,X_2,\cdots,X_t,L$ in $D^b(\A)$, define $F_{X_1,X_2,\cdots,X_t}^L$ to be the number such that
$$\mu_{[X_1]}\mu_{[X_2]}\cdots \mu_{[X_t]}=\sum\limits_{[L]\in {\rm Iso}(D^b(\A))}F_{X_1,X_2,\cdots,X_t}^L\mu_{[L]}.$$
By the associativity of the {derived Hall algebra} $\mathcal {D}\mathcal {H}(\A)$, we have that
\begin{equation}\label{associativity}
\begin{split}
F_{X_1,X_2,\cdots,X_t}^L&=\sum\limits_{[X]\in {\rm Iso}(D^b(\A))}F_{X_1,\cdots,X_{i-1},X}^LF_{X_i,\cdots,X_{t}}^X\\
&=\sum\limits_{[X']\in {\rm Iso}(D^b(\A))}F_{X_1,\cdots,X_{i}}^{X'}F_{X',X_{i+1},\cdots,X_{t}}^L\end{split}\end{equation} for each $1< i< t$.

By abuse of notation, in what follows, for each object $X$, we may also write $\mu_X$ and $u_X$ for $\mu_{[X]}$ and $u_{[X]}$, respectively.
The structure of the derived Hall algebra $\mathcal {D}\mathcal {H}(\A)$ has been characterized in \cite[Proposition 7.1]{Toen}. In particular, we have the following equations.
\begin{lemma}\label{diyiy}
$(1)$ For any objects $M,N\in\A$, we have that $$\mu_{M[i]}\mu_{N[j]}=q^{(-1)^{i-j}\lr{\hat{N},\hat{M}}}\mu_{N[j]}\mu_{M[i]}~\text{for~any}~i-j>1.$$

$(2)$ For any objects $M,N,X,Y\in\A$, we have that
\begin{equation*}
F_{M[1],N}^{X[1]\oplus Y}=F_{M,N[-1]}^{X\oplus Y[-1]}=q^{-\lr{\hat{Y},\hat{X}}}\frac{a_Xa_Y}{a_Ma_N}\sum\limits_{[L]\in{\rm Iso}(\A)}a_LF_{L,X}^MF_{Y,L}^N.
\end{equation*}
\end{lemma}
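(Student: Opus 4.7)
My approach is to reduce both parts to direct counts using the derived Riedtmann--Peng formula and the hereditary hypothesis, which forces most Hom/Ext factors to trivialize.

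For part (1), when $i-j>1$ we have $\Hom_{D^b(\A)}(M[i],N[j])=\Ext^{j-i}_{\A}(M,N)=0$ and $\Ext^1_{D^b(\A)}(M[i],N[j])=\Ext^{j-i+1}_{\A}(M,N)=0$, so the only $L$ contributing to either product is $L=M[i]\oplus N[j]$. In the derived Riedtmann--Peng formula, every factor cancels between the two orderings except $\{M[i],N[j]\}$ versus $\{N[j],M[i]\}$. Expanding the definitions, $\{M[i],N[j]\}=\prod_{k>0}|\Ext^{j-i-k}_{\A}(M,N)|^{(-1)^k}=1$ (every exponent is $<0$), whereas in $\{N[j],M[i]\}=\prod_{k>0}|\Ext^{i-j-k}_{\A}(N,M)|^{(-1)^k}$ exactly $k=i-j$ and $k=i-j-1$ contribute, yielding $\{N[j],M[i]\}=q^{(-1)^{i-j}\langle\hat N,\hat M\rangle}$. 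Taking the ratio $F_{M[i],N[j]}^{L}/F_{N[j],M[i]}^{L}$ gives the claimed commutation.

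For part (2), the equality $F_{M[1],N}^{X[1]\oplus Y}=F_{M,N[-1]}^{X\oplus Y[-1]}$ is immediate from translation invariance $F_{A,B}^{C}=F_{A[n],B[n]}^{C[n]}$ with $n=1$. For the main identity I would apply the derived Riedtmann--Peng formula to $F_{M[1],N}^{X[1]\oplus Y}$ and simplify each factor. The key count is
\[
|\Ext^1_{D^b(\A)}(M[1],N)_{X[1]\oplus Y}|=|\Hom_{\A}(M,N)_{X[2]\oplus Y[1]}|=\sum_{[L]\in\Iso(\A)}a_L\,g_{L,X}^{M}\,g_{Y,L}^{N},
\]
which I would justify as follows: since $\A$ is hereditary, every bounded complex in $D^b(\A)$ splits as the direct sum of its shifted cohomologies, so a morphism $\alpha\colon M\to N$ in $\A$ has cone $X[1]\oplus Y$ iff $\ker\alpha\cong X$ and $\Coker\alpha\cong Y$; factoring through $L=\im\alpha$ decomposes the count into $g_{Y,L}^{N}$ choices of image subobject inside $N$ and $g_{L,X}^{M}\cdot a_L$ choices of surjection $M\twoheadrightarrow L$ with kernel $\cong X$. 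Next, $\{M[1],N\}=\{M[1],M[1]\}=\{N,N\}=1$ by hereditary vanishing, $\{X[1]\oplus Y,X[1]\oplus Y\}=|\Hom_{\A}(Y,X)|^{-1}$ (only $\Hom_{D^b(\A)}(Y[1],X[1])$ survives at $k=1$), and a block-triangular endomorphism computation using $\Hom_{D^b(\A)}(Y,X[1])=\Ext^1_{\A}(Y,X)$ yields $a_{X[1]\oplus Y}=a_X a_Y\,|\Ext^1_{\A}(Y,X)|$. Combining these with $|\Ext^1_{\A}(Y,X)|/|\Hom_{\A}(Y,X)|=q^{-\langle\hat Y,\hat X\rangle}$ and $F_{L,X}^{M}=g_{L,X}^{M}$, $F_{Y,L}^{N}=g_{Y,L}^{N}$ for objects of $\A$ produces the claimed RHS.

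The principal obstacle is the final assembly of constants: one must verify that the nontrivial contributions $a_{X[1]\oplus Y}/(a_X a_Y)=|\Ext^1_{\A}(Y,X)|$ and $\{X[1]\oplus Y,X[1]\oplus Y\}=|\Hom_{\A}(Y,X)|^{-1}$ combine to produce exactly the factor $q^{-\langle\hat Y,\hat X\rangle}$, while all other potential Hom/Ext factors trivialize thanks to the hereditary hypothesis on $\A$.
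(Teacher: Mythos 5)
Your proof is correct. The paper itself does not prove this lemma --- it cites To\"en's Proposition 7.1 --- so there is no in-text argument to compare against; your direct verification via the derived Riedtmann--Peng formula (using hereditarity to kill all negative-degree Hom's, the splitting of cones into shifted cohomologies to get the count $\sum_{[L]}a_L\,g_{L,X}^{M}g_{Y,L}^{N}$, and the block-triangular computation of $a_{X[1]\oplus Y}$) is sound, and the constants do assemble to $q^{-\langle\hat Y,\hat X\rangle}$ exactly as you indicate.
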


We remark that  all the indices $[X]$ in the following sums $\sum$ are taken over $\Iso(\A)$, until to the end of the paper.
\begin{proposition}\label{gongshi1}
Given objects $L,M,N,L_i,M_i,N_i$ in $\A$, $i=1,2$, we have that
$$F_{L_1[-1]\oplus M_1\oplus N_1[1],L_2[-1]\oplus M_2\oplus N_2[1]}^{L[-1]\oplus M\oplus N[1]}=q^{\lr{\hat{L}_2,\hat{N}_1}}\sum\limits_{[I_1],[I_3],[I_4],[I_6]}F_{N_1[1],M_2}^{I_1[1]\oplus I_3}F_{M_1,L_2[-1]}^{I_4\oplus I_6[-1]}F_{L_1,I_6}^LF_{I_4,I_3}^MF_{I_1,N_2}^N.$$
\end{proposition}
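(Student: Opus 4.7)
The plan is to compute $\mu_X \mu_Y$ in the derived Hall algebra $\mathcal {D}\mathcal {H}(\A)$ by decomposing $\mu_X$ and $\mu_Y$ into products of single-shift factors, rearranging via the associativity formula \eqref{associativity}, and using a single commutation move from Lemma \ref{diyiy}(1).

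The key ingredient is a \emph{collapse identity}: for $U = A[k], V = B[l] \in D^b(\A)$ with $A, B \in \A$ and $k < l$, one has $\mu_U \mu_V = \mu_{U \oplus V}$. This holds because $\A$ is hereditary, so $\Hom_{D^b(\A)}(U, V[1]) = \Ext^{l+1-k}_\A(A, B) = 0$; the only triangle that contributes is the split one, and a direct computation via To\"en's formula (noting $\Hom_{D^b(\A)}(V, U) = 0$ and $\{V, U\} = 1$ when $k < l$) yields structure constant exactly $1$. Iterating in order of increasing shift yields $\mu_X = \mu_{L_1[-1]}\mu_{M_1}\mu_{N_1[1]}$ and $\mu_Y = \mu_{L_2[-1]}\mu_{M_2}\mu_{N_2[1]}$.

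Substituting into $\mu_X\mu_Y$ and applying Lemma \ref{diyiy}(1) to swap the adjacent factors $\mu_{N_1[1]}$ and $\mu_{L_2[-1]}$, whose shifts differ by $2$, produces exactly the scalar $q^{\lr{\hat{L}_2,\hat{N}_1}}$:
$$\mu_X \mu_Y = q^{\lr{\hat{L}_2,\hat{N}_1}}\, \mu_{L_1[-1]}\,(\mu_{M_1}\mu_{L_2[-1]})\,(\mu_{N_1[1]}\mu_{M_2})\,\mu_{N_2[1]}.$$
The two central products are evaluated directly from the derived Hall product formula: their connecting morphisms lie in $\Hom_\A(M_1, L_2)$ and $\Hom_\A(N_1, M_2)$, and the resulting cocones are $I_4 \oplus I_6[-1]$ (with $(I_4, I_6)$ the kernel and cokernel of $M_1 \to L_2$) and $I_1[1] \oplus I_3$ (with $(I_1, I_3)$ the kernel and cokernel of $N_1 \to M_2$), yielding the coefficients $F^{I_4 \oplus I_6[-1]}_{M_1, L_2[-1]}$ and $F^{I_1[1] \oplus I_3}_{N_1[1], M_2}$.

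It remains to expand $\mu_{L_1[-1]}\,\mu_{I_4 \oplus I_6[-1]}\,\mu_{I_1[1] \oplus I_3}\,\mu_{N_2[1]}$. Another application of the collapse identity gives $\mu_{I_4 \oplus I_6[-1]} = \mu_{I_6[-1]}\mu_{I_4}$ and $\mu_{I_1[1] \oplus I_3} = \mu_{I_3}\mu_{I_1[1]}$, so the product becomes $\mu_{L_1[-1]}\mu_{I_6[-1]}\mu_{I_4}\mu_{I_3}\mu_{I_1[1]}\mu_{N_2[1]}$. Regrouping as $(\mu_{L_1[-1]}\mu_{I_6[-1]})\cdot(\mu_{I_4}\mu_{I_3})\cdot(\mu_{I_1[1]}\mu_{N_2[1]})$ and invoking the shift-invariance $F^{B[k]}_{A[k],C[k]} = F^B_{A,C}$ of the derived Hall numbers identifies the three pairs with $\sum_L F^L_{L_1, I_6}\mu_{L[-1]}$, $\sum_M F^M_{I_4, I_3}\mu_M$, and $\sum_N F^N_{I_1, N_2}\mu_{N[1]}$, respectively. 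A final collapse $\mu_{L[-1]}\mu_M\mu_{N[1]} = \mu_{L[-1] \oplus M \oplus N[1]}$ reassembles $\mu_Z$, giving the claimed formula. The main technical point is verifying the collapse identity $\mu_U\mu_V = \mu_{U\oplus V}$ itself; once that is in hand, the rest of the proof is just associativity plus the single commutation from Lemma \ref{diyiy}(1).
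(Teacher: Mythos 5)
Your proposal is correct and follows essentially the same route as the paper's proof: decompose each factor as $\mu_{L_i[-1]}\mu_{M_i}\mu_{N_i[1]}$, commute $\mu_{N_1[1]}$ past $\mu_{L_2[-1]}$ via Lemma \ref{diyiy}(1) to produce $q^{\lr{\hat{L}_2,\hat{N}_1}}$, expand the two middle products, and reassemble by associativity. The only difference is that you supply an explicit verification (via To\"en's formula and heredity) of the collapse identity $\mu_{X[-1]}\mu_Y\mu_{Z[1]}=\mu_{X[-1]\oplus Y\oplus Z[1]}$, which the paper simply declares ``easy to see.''
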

\begin{proof}
Let us consider the multiplication $\mu_{L_1[-1]\oplus M_1\oplus N_1[1]}\mu_{L_2[-1]\oplus M_2\oplus N_2[1]}$ in $\mathcal {D}\mathcal {H}(\A)$. First of all, it is easy to see that $\mu_{X[-1]\oplus Y\oplus Z[1]}=\mu_{X[-1]}\mu_{Y}\mu_{Z[1]}$ for any objects $X,Y,Z\in\A$. So,
\begin{flalign*}
\mu_{L_1[-1]\oplus M_1\oplus N_1[1]}\mu_{L_2[-1]\oplus M_2\oplus N_2[1]}
=\mu_{L_1[-1]}\mu_{M_1}\mu_{N_1[1]}\mu_{L_2[-1]}\mu_{M_2}\mu_{N_2[1]}.
\end{flalign*}
By Lemma \ref{diyiy}(1), we have that $\mu_{N_1[1]}\mu_{L_2[-1]}=q^{\lr{\hat{L}_2,\hat{N}_1}}\mu_{L_2[-1]}\mu_{N_1[1]}.$ Thus,
\begin{flalign*}
\mu_{L_1[-1]\oplus M_1\oplus N_1[1]}\mu_{L_2[-1]\oplus M_2\oplus N_2[1]}
=q^{\lr{\hat{L}_2,\hat{N}_1}}\mu_{L_1[-1]}\mu_{M_1}\mu_{L_2[-1]}\mu_{N_1[1]}\mu_{M_2}\mu_{N_2[1]}.
\end{flalign*}
Since $\mu_{M_1}\mu_{L_2[-1]}=\sum\limits_{[I_4],[I_6]}F_{M_1,L_2[-1]}^{I_4\oplus I_6[-1]}\mu_{I_6[-1]}\mu_{I_4}$
and $\mu_{N_1[1]}\mu_{M_2}=\sum\limits_{[I_1],[I_3]}F_{N_1[1],M_2}^{I_1[1]\oplus I_3}\mu_{I_3}\mu_{I_1[1]}$, we obtain that
\begin{flalign*}
&\mu_{L_1[-1]\oplus M_1\oplus N_1[1]}\mu_{L_2[-1]\oplus M_2\oplus N_2[1]}\\
&=q^{\lr{\hat{L}_2,\hat{N}_1}}\sum\limits_{[I_4],[I_6],[I_1],[I_3]}F_{M_1,L_2[-1]}^{I_4\oplus I_6[-1]}F_{N_1[1],M_2}^{I_1[1]\oplus I_3}
\mu_{L_1[-1]}\mu_{I_6[-1]}\mu_{I_4}\mu_{I_3}\mu_{I_1[1]}\mu_{N_2[1]}\\
&=q^{\lr{\hat{L}_2,\hat{N}_1}}\sum\limits_{[I_1],[I_3],[I_4],[I_6]\atop [L],[M],[N]}F_{N_1[1],M_2}^{I_1[1]\oplus I_3}F_{M_1,L_2[-1]}^{I_4\oplus I_6[-1]}F_{L_1,I_6}^LF_{I_4,I_3}^MF_{I_1,N_2}^N\mu_{I[-1]}\mu_{M}\mu_{N[1]}\\
&=q^{\lr{\hat{L}_2,\hat{N}_1}}\sum\limits_{[I_1],[I_3],[I_4],[I_6]\atop [L],[M],[N]}F_{N_1[1],M_2}^{I_1[1]\oplus I_3}F_{M_1,L_2[-1]}^{I_4\oplus I_6[-1]}F_{L_1,I_6}^LF_{I_4,I_3}^MF_{I_1,N_2}^N\mu_{I[-1]\oplus M\oplus N[1]}.
\end{flalign*}
Hence, for each fixed $L,M,N\in\A$, we obtain the desired equation.
\end{proof}
We remark that, in Proposition \ref{gongshi1}, for each triangle $$L_2[-1]\oplus M_2\oplus N_2[1]\longrightarrow L[-1]\oplus M\oplus N[1]\longrightarrow L_1[-1]\oplus M_1\oplus N_1[1]$$ in $D^b(\A)$, we have the following long exact sequence in $\A$
$$\xymatrix@R=0.7pc @C=0.6pc{0\ar[r]&N_2\ar[r]&N\ar[rr]\ar@{->>}[rd]&&N_1\ar[rr]\ar@{->>}[rd]&&M_2\ar[rr]\ar@{->>}[rd]&&M\ar[rr]\ar@{->>}[rd]&&M_1\ar[rr]\ar@{->>}[rd]&&L_2\ar[rr]\ar@{->>}[rd]&&L\ar[r]&L_1\ar[r]&0.\\
&&&I_1\ar@{>->}[ru]&&I_2\ar@{>->}[ru]&&I_3\ar@{>->}[ru]&&I_4\ar@{>->}[ru]&&I_5\ar@{>->}[ru]&&I_6\ar@{>->}[ru]&&&}$$
\begin{corollary}\label{tuilun1}
Given objects $M_1,M_2,N_1, L_2, M$ in $\A$, we have that
\begin{flalign*}F_{M_1,L_2[-1],N_1[1],M_2}^M&=q^{-\lr{\hat{L}_2,\hat{N}_1}}F_{M_1\oplus N_1[1], L_2[-1]\oplus M_2}^M\\
&=\sum\limits_{[I_3],[I_4]}\frac{a_{I_3}a_{I_4}}{a_{M_1}a_{M_2}}F_{L_2,I_4}^{M_1}F_{I_3,N_1}^{M_2}F_{I_4,I_3}^M.\end{flalign*}
\end{corollary}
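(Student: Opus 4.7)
The corollary clearly descends from Proposition~\ref{gongshi1} by specialization, so my plan is to deduce it in two passes.

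For the first equality, my approach is to compare two expressions for the product $\mu_{M_1\oplus N_1[1]}\mu_{L_2[-1]\oplus M_2}$ inside $\mathcal{D}\mathcal{H}(\A)$. By the factorization $\mu_{X\oplus Y[1]}=\mu_X\mu_{Y[1]}$ already used in the proof of Proposition~\ref{gongshi1} (which rests on the vanishing of $\Ext^2$ in $\A$), this product equals $\mu_{M_1}\mu_{N_1[1]}\mu_{L_2[-1]}\mu_{M_2}$. I will then apply Lemma~\ref{diyiy}(1) to the middle pair $\mu_{N_1[1]}\mu_{L_2[-1]}$, since the shift difference equals $2>1$, producing the scalar $q^{\lr{\hat{L}_2,\hat{N}_1}}$ and re-ordering the factors to $\mu_{M_1}\mu_{L_2[-1]}\mu_{N_1[1]}\mu_{M_2}$. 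Reading off the coefficient of $\mu_M$ on both sides via the associativity relation \eqref{associativity} yields $F_{M_1\oplus N_1[1],L_2[-1]\oplus M_2}^M=q^{\lr{\hat{L}_2,\hat{N}_1}}F_{M_1,L_2[-1],N_1[1],M_2}^M$, which is the first equality.

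For the second equality, my plan is to specialize Proposition~\ref{gongshi1} by setting $L_1=N_2=0$ on the left-hand side and $L=N=0$ on the target. The left-hand factor becomes $M_1\oplus N_1[1]$, the right-hand factor becomes $L_2[-1]\oplus M_2$, and the target collapses to $M$. In the sum on the right, the factors $F_{L_1,I_6}^L=F_{0,I_6}^0$ and $F_{I_1,N_2}^N=F_{I_1,0}^0$ become $\delta_{[I_6],[0]}$ and $\delta_{[I_1],[0]}$ respectively, because $\mu_0$ is the unit of $\mathcal{D}\mathcal{H}(\A)$. After this collapse the remaining summation runs only over $[I_3],[I_4]$.

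It then remains to evaluate the two surviving mixed numbers $F_{M_1,L_2[-1]}^{I_4}=F_{M_1,L_2[-1]}^{I_4\oplus 0[-1]}$ and $F_{N_1[1],M_2}^{I_3}=F_{N_1[1],M_2}^{0[1]\oplus I_3}$ by Lemma~\ref{diyiy}(2). In each case one sum index is forced to be $L_2$ or $N_1$ by the Kronecker delta coming from an $F_{0,L}^{L_2}$ or $F_{L,0}^{N_1}$ factor, the Euler exponents vanish, and the formulas simplify to $F_{M_1,L_2[-1]}^{I_4}=\frac{a_{I_4}}{a_{M_1}}F_{L_2,I_4}^{M_1}$ and $F_{N_1[1],M_2}^{I_3}=\frac{a_{I_3}}{a_{M_2}}F_{I_3,N_1}^{M_2}$. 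Substituting these into the specialized identity and multiplying through by $q^{-\lr{\hat{L}_2,\hat{N}_1}}$ produces the claimed formula.

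The main obstacle is not conceptual but combinatorial: one must correctly track the degenerate cases of Lemma~\ref{diyiy}(2) (keeping careful account of which $a_{(-)}$ and which $q$-power survives when one of the summands is zero), and verify that both forms $F_{M[1],N}^{X[1]\oplus Y}$ and $F_{M,N[-1]}^{X\oplus Y[-1]}$ are being applied on the correct side in the two evaluations. Once the bookkeeping of these two unit specializations is under control, the identity drops out immediately from Proposition~\ref{gongshi1}.
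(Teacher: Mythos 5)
Your proposal is correct and follows essentially the same route as the paper: the first equality comes from factoring $\mu_{M_1\oplus N_1[1]}\mu_{L_2[-1]\oplus M_2}$ and commuting $\mu_{N_1[1]}$ past $\mu_{L_2[-1]}$ via Lemma \ref{diyiy}(1), and the second comes from specializing Proposition \ref{gongshi1} at $L=L_1=N=N_2=0$ and evaluating the two surviving mixed Hall numbers by the degenerate cases of Lemma \ref{diyiy}(2), exactly as in the paper. The bookkeeping you flag (which index is killed by the Kronecker delta and which $a_{(-)}$ survives) works out precisely as you state.
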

\begin{proof}
Noting that \begin{flalign*}\mu_{M_1}\mu_{L_2[-1]}\mu_{N_1[1]}\mu_{M_2}&=q^{-\lr{\hat{L}_2,\hat{N}_1}}\mu_{M_1}\mu_{N_1[1]}\mu_{L_2[-1]}\mu_{M_2}\\
&=q^{-\lr{\hat{L}_2,\hat{N}_1}}\mu_{M_1\oplus N_1[1]}\mu_{L_2[-1]\oplus M_2},
\end{flalign*}
we obtain the first equation.
Taking $L=L_1=0$ and $N=N_2=0$ in Proposition \ref{gongshi1}, we have that $I_1=I_6=0$, and thus
\begin{flalign*}F_{M_1\oplus N_1[1], L_2[-1]\oplus M_2}^M&=q^{\lr{\hat{L}_2,\hat{N}_1}}\sum\limits_{[I_3],[I_4]}F_{N_1[1],M_2}^{I_3}F_{M_1,L_2[-1]}^{I_4}F_{I_4,I_3}^M\\
&=q^{\lr{\hat{L}_2,\hat{N}_1}}\sum\limits_{[I_3],[I_4]}\frac{a_{I_3}}{a_{M_2}}F_{I_3,N_1}^{M_2}\frac{a_{I_4}}{a_{M_1}}F_{L_2,I_4}^{M_1}F_{I_4,I_3}^M.\end{flalign*}
Therefore, we finish the proof.
\end{proof}
\begin{corollary}\label{tuilun2.7}
Given objects $M_1,M_2,N_1, L_2, M$ in $\A$, we have that
\begin{flalign*}H_{M_1\oplus N_1[1], L_2[-1]\oplus M_2}^M&=q^{-\lr{\hat{M}_1,\hat{N}_1}-\lr{\hat{L}_2,\hat{M}_2}}\frac{a_{M_1}a_{M_2}a_{N_1}a_{L_2}}{a_M}F_{M_1\oplus N_1[1],L_2[-1]\oplus M_2}^M\\
&=q^{\lr{\hat{L}_2,\hat{N}_1}-\lr{\hat{M}_1,\hat{N}_1}-\lr{\hat{L}_2,\hat{M}_2}}\frac{a_{M_1}a_{M_2}a_{N_1}a_{L_2}}{a_M}F_{M_1,L_2[-1],N_1[1],M_2}^M.\end{flalign*}
\end{corollary}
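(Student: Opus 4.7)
The plan is to convert from the $\mu$-generators to the $u$-generators, then specialise. Combining the identity $u_{[X]} = \{X,X\}\cdot a_X\,\mu_{[X]}$ with the two product formulas for $\mathcal{D}\mathcal{H}(\A)$, a direct manipulation gives
\[
H_{X,Y}^L \;=\; \frac{\{X,X\}\{Y,Y\}\,a_X a_Y}{\{L,L\}\,a_L}\cdot F_{X,Y}^L
\]
for any $X,Y,L\in D^b(\A)$. I will apply this general conversion with $X=M_1\oplus N_1[1]$, $Y=L_2[-1]\oplus M_2$, $L=M\in\A$, and then invoke Corollary~\ref{tuilun1} to pass from the two-fold product to the four-fold product.

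The bulk of the work is computing the five factors on the right-hand side. Since $M\in\A$ and $\A$ is hereditary, $\Hom_{D^b(\A)}(M[i],M)=\Ext_{\A}^{-i}(M,M)=0$ for all $i>0$, hence $\{M,M\}=1$. For $X=M_1\oplus N_1[1]$, expand $\Hom_{D^b(\A)}(X[i],X)$ as a direct sum of four Hom-spaces; because $\Ext^{j}_{\A}=0$ for $j\notin\{0,1\}$, the only surviving term corresponds to $i=1$ and equals $\Hom_{D^b(\A)}(M_1[1],N_1[1])=\Hom_{\A}(M_1,N_1)$, so $\{X,X\}=|\Hom_{\A}(M_1,N_1)|^{-1}$. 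A symmetric computation yields $\{Y,Y\}=|\Hom_{\A}(L_2,M_2)|^{-1}$. For the automorphism groups, $\End_{D^b(\A)}(X)$ has a block-triangular description with diagonal $\End_{\A}(M_1)\times\End_{\A}(N_1)$; one off-diagonal block is $\Hom_{D^b(\A)}(M_1,N_1[1])=\Ext^1_{\A}(M_1,N_1)$, and the other is $\Hom_{D^b(\A)}(N_1[1],M_1)=\Ext^{-1}_{\A}(N_1,M_1)=0$. Invertibility of an endomorphism is thus equivalent to invertibility of its diagonal blocks, giving $a_X=a_{M_1}a_{N_1}|\Ext^1_{\A}(M_1,N_1)|$, and likewise $a_Y=a_{L_2}a_{M_2}|\Ext^1_{\A}(L_2,M_2)|$.

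Plugging these five values into the conversion formula and collapsing the ratios via $|\Ext^1_{\A}(A,B)|/|\Hom_{\A}(A,B)|=q^{-\lr{\hat{A},\hat{B}}}$ produces the first equation of the corollary. The second equation is then immediate: Corollary~\ref{tuilun1} gives $F_{M_1\oplus N_1[1],\,L_2[-1]\oplus M_2}^M = q^{\lr{\hat{L}_2,\hat{N}_1}}\,F_{M_1,L_2[-1],N_1[1],M_2}^M$, and substituting this into the first equation merges the exponents into $\lr{\hat{L}_2,\hat{N}_1}-\lr{\hat{M}_1,\hat{N}_1}-\lr{\hat{L}_2,\hat{M}_2}$. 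The only real difficulty is careful bookkeeping of shifts when identifying $\Hom$-spaces in $D^b(\A)$ with $\Hom$ or $\Ext^1$-spaces in $\A$; once those identifications are made, the proof is a mechanical substitution.
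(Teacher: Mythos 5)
Your proposal is correct and follows essentially the same route as the paper: reduce the second identity to the first via Corollary \ref{tuilun1}, then use the relation $H_{X,Y}^L=\frac{a_Xa_Y}{a_L}\cdot\frac{\{X,X\}\{Y,Y\}}{\{L,L\}}F_{X,Y}^L$ and compute $a_{M_1\oplus N_1[1]}$, $a_{L_2[-1]\oplus M_2}$, and the three $\{\cdot,\cdot\}$ factors exactly as in the paper's proof. The bookkeeping of shifts and the final simplification via $|\Ext^1_{\A}(A,B)|/|\Hom_{\A}(A,B)|=q^{-\lr{\hat{A},\hat{B}}}$ all check out.
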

\begin{proof}
By Corollary \ref{tuilun1}, we only need to prove the first equation. By definition,
{\small\begin{flalign*}&H_{M_1\oplus N_1[1], L_2[-1]\oplus M_2}^M=\\&\frac{a_{M_1\oplus N_1[1]}a_{L_2[-1]\oplus M_2}}{a_M}\frac{\{M_1\oplus N_1[1],M_1\oplus N_1[1]\}\{L_2[-1]\oplus M_2,L_2[-1]\oplus M_2\}}{\{M,M\}}F_{M_1\oplus N_1[1], L_2[-1]\oplus M_2}^M\end{flalign*}}
and it is easy to see that $$a_{M_1\oplus N_1[1]}=a_{M_1}a_{N_1}|\Ext_{\A}^1(M_1,N_1)|,~a_{L_2[-1]\oplus M_2}=a_{L_2}a_{M_2}|\Ext_{\A}^1(L_2,M_2)|,$$
$$\{M,M\}=1,~\{M_1\oplus N_1[1],M_1\oplus N_1[1]\}=\frac{1}{|{\rm Hom}_{\A}(M_1,N_1)|},$$
and $$\{L_2[-1]\oplus M_2,L_2[-1]\oplus M_2\}=\frac{1}{|{\rm Hom}_{\A}(L_2,M_2)|}.$$ Combining these, we complete the proof.
\end{proof}

\section{Hall algebras for root categories}
In this section, we define a Hall algebra for the root category of $\A$ and prove its associativity. Let $G$ be an automorphism of $D^b(\A)$. We recall that the {\em orbit category} $D^b(\A)/G$ is the category whose objects are
by definition the $G$-orbits $\widetilde{X}$ of objects $X$ in $D^b(\A)$ and morphisms are given by
$$\Hom_{D^b(\A)/G}(\widetilde{X},\widetilde{Y})=\bigoplus\limits_{i\in\mathbb{Z}}\Hom_{D^b(\A)}(X,G^iY).$$
If $G$ is the shift functor $[1]$, the corresponding orbit category is called the {\em $1$-periodic derived category} of $\A$, which is also denoted by $\mathcal {D}_1(\A)$.
If $G$ is the shift functor $[2]$, the corresponding orbit category is called the {\em root category} of $\A$, which is also denoted by $\mathcal {R}(\A)$.
The categories $\mathcal {D}_1(\A)$ and $\mathcal {R}(\A)$ are both triangulated categories (cf. \cite{RingelZ,PX97,Kellero}), and their suspension functors are also denoted by $[1]$. The canonical covering functors $F:D^b(\A)\rightarrow D^b(\A)/G$ for $G=[1],[2]$ are triangulated functors, and the abelian category $\A$ can be fully embedded into $\mathcal {R}(\A)$.

The following lemma is needed in the sequel.
\begin{lemma}\label{triangles}
For any $5$-term exact sequence $0\rightarrow K\rightarrow X\stackrel{f_1}\rightarrow Y\stackrel{f_2}\rightarrow Z\rightarrow C\rightarrow 0$ in $\A$, there exist $\delta_1,\delta_2$ such that we have the following triangle in $D^b(\A)${\rm:}
$$\xymatrix{X\oplus C[-1]\ar[r]^-{(f_1,\delta_1)}& Y\ar[r]^-{\delta_2\choose f_2}& K[1]\oplus Z.}$$
\end{lemma}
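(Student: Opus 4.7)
The plan is to split the five-term exact sequence into short exact sequences, convert them to triangles in $D^b(\A)$, and then assemble them via the octahedral axiom, exploiting $\Ext^2_{\A}=0$ to split auxiliary cones as direct sums.

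Set $I:=\im f_1=\ker f_2$ and $J:=\im f_2$. The hypothesis decomposes the given sequence into three short exact sequences $0\to K\to X\to I\to 0$, $0\to I\to Y\to J\to 0$, and $0\to J\to Z\to C\to 0$. The cone of $f_1$ in $D^b(\A)$ is represented by the two-term complex $[X\xrightarrow{f_1}Y]$ in degrees $-1$ and $0$, whose cohomology is $K$ in degree $-1$ and $J$ in degree $0$. Since $\A$ is hereditary, the standard truncation argument (using $\Ext^2_{\A}(J,K)=0$) yields $\cone(f_1)\cong K[1]\oplus J$ in $D^b(\A)$, giving a triangle
$$X\xrightarrow{f_1} Y\xrightarrow{\binom{\delta_2}{\pi}} K[1]\oplus J\longrightarrow X[1],$$
where $\pi:Y\twoheadrightarrow J$ is the canonical projection and $\delta_2:Y\to K[1]$ is the remaining component. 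Direct-summing the triangle of the third SES with the identity triangle on $K[1]$ produces
$$K[1]\oplus J\xrightarrow{\mathrm{id}\oplus\iota} K[1]\oplus Z\longrightarrow C\longrightarrow K[2]\oplus J[1],$$
where $\iota:J\hookrightarrow Z$ is the inclusion.

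Now apply the octahedral axiom to the composition $Y\to K[1]\oplus J\to K[1]\oplus Z$, which simplifies to $\binom{\delta_2}{f_2}$ since $\iota\circ\pi=f_2$. The two outer cones are $X[1]$ (rotation of the first triangle) and $C$, and the octahedral axiom supplies a triangle $X[1]\to T\to C\to X[2]$ with $T:=\cone(\binom{\delta_2}{f_2})$. Because $\A$ is hereditary, $\Hom_{D^b(\A)}(C,X[2])=\Ext^2_{\A}(C,X)=0$, so this triangle splits and $T\cong X[1]\oplus C$. This yields the triangle
$$Y\xrightarrow{\binom{\delta_2}{f_2}} K[1]\oplus Z\longrightarrow X[1]\oplus C\longrightarrow Y[1],$$
and rotating by one step to the left produces the desired triangle $X\oplus C[-1]\to Y\to K[1]\oplus Z\to X[1]\oplus C$.

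It remains to identify the first component of the leftmost map with $f_1$, after which the second component may be called $\delta_1$. The octahedral diagram furnishes a canonical map $X[1]\to T$ whose composition with the connecting morphism $T\to Y[1]$ agrees with the connecting morphism $X[1]\to Y[1]$ of the triangle obtained in the first step, namely $f_1[1]$. Choosing the splitting $T\cong X[1]\oplus C$ so that this canonical $X[1]\to T$ is the inclusion of the first summand forces the first component of $X[1]\oplus C\to Y[1]$ to be $f_1[1]$; desuspending gives the component $f_1$ on the $X$ summand of the leftmost map. The main technical obstacle is precisely arranging the splitting $T\cong X[1]\oplus C$ compatibly inside the octahedral diagram so that the first component is literally $f_1$ and not merely a map conjugate to it by a triangle automorphism; once this compatibility is fixed, the required $\delta_1$ and $\delta_2$ exist.
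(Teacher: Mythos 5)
Your proof is correct, and while it rests on the same two ingredients as the paper's argument --- the octahedral axiom together with the splitting of auxiliary cones forced by the vanishing of $\Ext^2_{\A}$ --- it assembles the triangle in the mirror-image order, so the two proofs are genuinely different in organization. The paper starts from the right end of the exact sequence: it applies the octahedron to the factorization $Y\twoheadrightarrow \im f_2\hookrightarrow Z$ to split $\cone(f_2)\cong I_1[1]\oplus C$ (using $\Hom_{D^b(\A)}(C,I_1[2])=0$), extracts the triangle $I_1\oplus C[-1]\to Y\to Z$, and then runs a second octahedron over the composition $X\oplus C[-1]\to I_1\oplus C[-1]\to Y$ to absorb $K$ (using $\Hom_{D^b(\A)}(Z,K[2])=0$). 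You start from the left end: you split $\cone(f_1)\cong K[1]\oplus J$ directly via the decomposition of a complex over a hereditary category into its shifted cohomologies (using $\Ext^2_{\A}(J,K)=0$), which has the advantage that the component $Y\to J$ is visibly the canonical projection, and then you run a single octahedron over $Y\to K[1]\oplus J\to K[1]\oplus Z$ to absorb $C$ (using $\Ext^2_{\A}(C,X)=0$). The bookkeeping issue you flag at the end is real but is correctly resolved exactly as you indicate: since $\Hom_{D^b(\A)}(C,X[2])=0$, the splitting $T\cong X[1]\oplus C$ may be chosen so that the octahedron's map $X[1]\to T$ is the inclusion of the first summand, and then the octahedron compatibility between connecting morphisms forces the first component of $T\to Y[1]$ to be $\pm f_1[1]$; the sign introduced here cancels against the sign from the final backward rotation, and any residual sign is absorbed by an automorphism of $X\oplus C[-1]$. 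This is precisely the analogue of the paper's explicit tracking of the isomorphisms $\rho$ and $\eta$, so your argument closes the gap and is, if anything, marginally shorter since one of the paper's two octahedra is replaced by the canonical cohomology splitting.
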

\begin{proof}
First of all, we have the following commutative diagram in $\A$
$$\xymatrix@R=0.7pc{0\ar[r]&K\ar[r]^-{\lambda}& X\ar[rr]^-{f_1}\ar@{->>}[rd]_-{\pi_1}&& Y\ar[rr]^-{f_2}\ar@{->>}[rd]_-{\pi_2}&&Z\ar[r]^{p}& C\ar[r]& 0,\\
&&&I_1\ar@{>->}[ru]_-{i_1}&&I_2\ar@{>->}[ru]_-{i_2}&&&}$$ where $I_1=\im f_1$ and $I_2=\im f_2$.

By the octahedral axiom in triangulated categories, we have the following commutative diagram of triangles in $D^b(\A)$
$$\xymatrix{Y\ar[r]^-{\pi_2}\ar@{=}[d]&I_2\ar[r]\ar[d]^{i_2}&I_1[1]\ar@{-->}[d]\\
Y\ar[r]^-{f_2}&Z\ar[r]\ar[d]^-{p}&\cone(f_2)\ar@{-->}[d]\\
&C\ar@{=}[r]&C.}$$
Since $\A$ is hereditary, $\Hom_{D^b(\A)}(C,I_1[2])=0$. Thus, $\cone(f_2)\cong I_1[1]\oplus C$ in $D^b(\A)$. So we have the following commutative diagram of triangles in $D^b(\A)$
$$\xymatrix@C=4pc {Y\ar[r]^-{\pi_2}\ar@{=}[d]&I_2\ar[r]\ar[d]^{i_2}&I_1[1]\ar@{-->}[d]^-{{\rho[1]\choose 0}}\ar[r]^{-i_1[1]}&Y[1]\ar@{=}[d]\\
Y\ar[r]^-{f_2}&Z\ar[r]\ar[d]^-{p}&I_1[1]\oplus C\ar@{-->}[d]\ar[r]^-{(-i'_1[1],-\delta'_1)}&Y[1]\\
&C\ar@{=}[r]&C&}$$ such that $i_1=i'_1\rho$ and $\rho:I_1\rightarrow I_1$ is an isomorphism.
Taking $\delta_1=\delta'_1[-1]$, we obtain a triangle in $D^b(\A)$
$$\xymatrix{I_1\oplus C[-1]\ar[r]^-{(i'_1,\delta_1)}&Y\ar[r]^-{f_2}&Z.}$$ Since $\rho$ is an isomorphism, it is easy to see that
$\xymatrix{I_1\oplus C[-1]\ar[r]^-{(i_1,\delta_1)}&Y\ar[r]^-{f_2}&Z}$ is also a triangle in $D^b(\A)$. Considering the following commutative diagram of triangles in $D^b(\A)$
$$\xymatrix{X\oplus C[-1]\ar[r]^-{\left({\begin{smallmatrix}{\pi_1}&\\&1\end{smallmatrix}}\right)}\ar@{=}[d]&I_1\oplus C[-1]\ar[r]\ar[d]^{(i_1,\delta_1)}&K[1]\ar@{-->}[d]\\
X\oplus C[-1]\ar[r]^-{(f_1,\delta_1)}&Y\ar[r]\ar[d]^-{f_2}&\cone(f_1,\delta_1)\ar@{-->}[d]\\
&Z\ar@{=}[r]&Z,}$$
we conclude that $\cone(f_1,\delta_1)\cong K[1]\oplus Z$ in $D^b(\A)$, since $\Hom_{D^b(\A)}(Z,K[2])=0$.
Hence, we have the following commutative diagram of triangles in $D^b(\A)$
$$\xymatrix{X\oplus C[-1]\ar[r]^-{\left({\begin{smallmatrix}{\pi_1}&\\&1\end{smallmatrix}}\right)}\ar@{=}[d]&I_1\oplus C[-1]\ar[r]\ar[d]^{(i_1,\delta_1)}&K[1]\ar@{-->}[d]\\
X\oplus C[-1]\ar[r]^-{(f_1,\delta_1)}&Y\ar[r]^-{{\delta_2\choose x}}\ar[d]^-{f_2}&K[1]\oplus Z\ar@{-->}[d]^-{(0,\eta)}\\
&Z\ar@{=}[r]&Z}$$ such that $f_2=\eta x$ and $\eta$ is an isomorphism. Therefore, we obtain that $$\xymatrix{X\oplus C[-1]\ar[r]^-{(f_1,\delta_1)}& Y\ar[r]^-{\delta_2\choose f_2}& K[1]\oplus Z}$$ is a triangle in $D^b(\A)$.
\end{proof}

Now, let us compare the extensions in the root category $\mathcal {R}(\A)$ with those in the derived category $D^b(\A)$.
Given objects $A_i, B_i\in\A$ with $i=0,1$, we have the following isomorphisms
\begin{flalign*}
\Ext^1_{\mathcal {R}(\A)}(\widetilde{A}_0\oplus \widetilde{A}_1[1],\widetilde{B}_0\oplus \widetilde{B}_1[1])&=\Hom_{\mathcal {R}(\A)}(\widetilde{A}_0\oplus \widetilde{A}_1[1],\widetilde{B}_0[1]\oplus \widetilde{B}_1[2])\\
&\cong\Hom_{\mathcal {R}(\A)}(\widetilde{A}_0\oplus \widetilde{A}_1[1],\widetilde{B}_0[1]\oplus \widetilde{B}_1)\\
&\cong\bigoplus\limits_{i\in\mathbb{Z}}\Hom_{D^b(\A)}(A_0\oplus A_1[1],B_0[2i+1]\oplus B_1[2i]).
\end{flalign*}
Since $\A$ is hereditary, we obtain that
\begin{equation}\label{tonggou}
\begin{split}
&\Ext^1_{\mathcal {R}(\A)}(\widetilde{A}_0\oplus \widetilde{A}_1[1],\widetilde{B}_0\oplus \widetilde{B}_1[1])\\&\cong\Ext^1_{\A}(A_0,B_0)\oplus \Hom_{\A}(A_0,B_1)\oplus \Ext^1_{\A}(A_1,B_1)\oplus\Hom_{\A}(A_1,B_0).
\end{split}
\end{equation}

In fact, for any triangle $\widetilde{B}_0\oplus \widetilde{B}_1[1]\longrightarrow \widetilde{M}_0\oplus \widetilde{M}_1[1]\longrightarrow \widetilde{A}_0\oplus \widetilde{A}_1[1]$ in $\mathcal {R}(\A)$ such that $A_i,B_i,M_i\in\A$ with $i=0,1$, we have the following cyclic exact sequence in $\A$
$$\cdots\rightarrow B_0\rightarrow M_0\rightarrow A_0\stackrel{g}\rightarrow B_1\rightarrow M_1\rightarrow A_1\stackrel{f}\rightarrow B_0\rightarrow M_0\rightarrow A_0\rightarrow \cdots.$$
Setting $\im f=I_0$ and $\im g=I_1$, we have the following long exact sequence in $\A$
\begin{equation}\label{xyczhl}
\xymatrix@R=0.8pc @C=0.6pc{0\ar[r]&I_0\ar[r]&B_0\ar[r]&M_0\ar[r]&A_0\ar[rr]^-{g}\ar@{->>}[rd]&&B_1\ar[r]&M_1\ar[r]&A_1\ar[rr]^-{f}\ar@{->>}[rd]&&B_0\ar[r]&M_0\ar[r]&A_0\ar[r]&I_1\ar[r]&0.\\
&&&&&I_1\ar@{>->}[ru]&&&&I_0\ar@{>->}[ru]&&&&&}\end{equation}
Thus, by Lemma \ref{triangles}, we obtain two triangles in $D^b(\A)$
\begin{equation}\label{sanjiao1}
B_0\oplus I_1[-1]\longrightarrow M_0\longrightarrow I_0[1]\oplus A_0
\end{equation} and
\begin{equation}\label{sanjiao2}
B_1\oplus I_0[-1]\longrightarrow M_1\longrightarrow I_1[1]\oplus A_1.
\end{equation}

Conversely, given triangles (\ref{sanjiao1}) and (\ref{sanjiao2}) in $D^b(\A)$, by considering the long exact sequences in homologies and splicing them, we also have a long exact sequence (\ref{xyczhl}).

On the other hand,
$$\Ext^1_{D^b(\A)}(I_0[1]\oplus A_0,B_0\oplus I_1[-1])\cong\Ext_{\A}^1(A_0,B_0)\oplus\Hom_{\A}(A_0,I_1)\oplus\Hom_{\A}(I_0,B_0)$$
and
$$\Ext^1_{D^b(\A)}(I_1[1]\oplus A_1,B_1\oplus I_0[-1])\cong\Ext_{\A}^1(A_1,B_1)\oplus\Hom_{\A}(A_1,I_0)\oplus\Hom_{\A}(I_1,B_1).$$
Note that the elements in $\Ext^1_{D^b(\A)}(I_0[1]\oplus A_0,B_0\oplus I_1[-1])$ consist of the equivalence classes of the triangles in $D^b(\A)$ of the following form
$$B_0\oplus I_1[-1]\longrightarrow C[-1]\oplus M\oplus K[1]\longrightarrow I_0[1]\oplus A_0$$ with $C,M,K\in\A$, where $C,K$ may not be zero. Hence, in general,
$$\bigoplus\limits_{[I_0],[I_1]\in{\rm Iso}(\A)}(\Ext^1_{D^b(\A)}(I_0[1]\oplus A_0,B_0\oplus I_1[-1])\oplus\Ext^1_{D^b(\A)}(I_1[1]\oplus A_1,B_1\oplus I_0[-1]))$$
is larger than $\Ext^1_{\mathcal {R}(\A)}(\widetilde{A}_0\oplus \widetilde{A}_1[1],\widetilde{B}_0\oplus \widetilde{B}_1[1])$. However, we have the following equation.
\begin{proposition}
For any objects $A_i, B_i, M_i\in\A$ with $i=0,1$, we have that
\begin{equation*}
\begin{split}
&|\Ext^1_{\mathcal {R}(\A)}(\widetilde{A}_0\oplus \widetilde{A}_1[1],\widetilde{B}_0\oplus \widetilde{B}_1[1])_{\widetilde{M}_0\oplus \widetilde{M}_1[1]}|=
\\&\sum\limits_{[I_0],[I_1]\in{\rm Iso}(\A)}\frac{|\Ext^1_{D^b(\A)}(I_0[1]\oplus A_0,B_0\oplus I_1[-1])_{M_0}|\cdot|\Ext^1_{D^b(\A)}(I_1[1]\oplus A_1,B_1\oplus I_0[-1])_{M_1}|}{a_{I_0}a_{I_1}}.
\end{split}
\end{equation*}
\end{proposition}
\begin{proof}
By relating the equivalence classes in three extension subsets with the equivalence classes of long exact sequences under the corresponding automorphism group actions, and using the above discussions, we can easily complete the proof.
\end{proof}

Hence, we define a Hall algebra associated to the root category $\mathcal {R}(\A)$ as follows.
\begin{definition}\label{maindef}
The Hall algebra $\mathcal {D}\mathcal {H}_2(\A)$, called the {\em $2$-periodic extended derived Hall algebra} of $\A$, is the $\mathbb{C}$-vector space with the basis $$\{K_{\alpha}K_{\beta}^\ast u_{M_0\oplus M_1[1]}~|~\alpha,\beta\in K(\A),[M_0],[M_1]\in {\rm Iso}(\A)\},$$ and with the multiplication defined on basis elements by
{\small\begin{flalign*}&(K_{\alpha_0}K_{\alpha_1}^*u_{A_0\oplus A_1[1]})(K_{\beta_0}K_{\beta_1}^*u_{B_0\oplus B_1[1]})=\\&
v^{a_0}\sum\limits_{[I_0],[I_1],[M_0],[M_1]\in{\rm Iso}(\A)}v^{\lr{\hat{I}_1-\hat{I}_0,\hat{A}_0-\hat{A}_1+\hat{B}_0-\hat{B}_1}}
\frac{H_{I_0[1]\oplus A_0,B_0\oplus I_1[-1]}^{M_0}H_{I_1[1]\oplus A_1,B_1\oplus I_0[-1]}^{M_1}}{a_{I_0}a_{I_1}}\\
&\quad\quad \quad\quad\quad\quad\quad\quad\quad\quad\quad\quad K_{\alpha_0+\beta_0+\hat{I}_0}K_{\alpha_1+\beta_1+\hat{I}_1}^\ast u_{M_0\oplus M_1[1]},
\end{flalign*}}
where $a_0=(\beta_1-\beta_0,\hat{A}_0-\hat{A}_1)+\lr{\hat{A}_0,\hat{B}_0}+\lr{\hat{A}_1,\hat{B}_1}$.
\end{definition}
\begin{remark}\label{dyjx}
$(1)$~In Definition \ref{maindef}, we have the following equations
\begin{flalign}
&K_{\alpha_0}K_{\beta_0}=K_{\alpha_0+\beta_0},~K_{\alpha_1}^\ast K_{\beta_1}^\ast=K_{\alpha_1+\beta_1}^\ast,~K_{\alpha_1}^*K_{\beta_0}=K_{\beta_0}K_{\alpha_1}^*,\\
&u_{A_0\oplus A_1[1]}K_{\beta_0}=v^{-(\beta_0,\hat{A}_0-\hat{A}_1)}K_{\beta_0}u_{A_0\oplus A_1[1]},\\
&u_{A_0\oplus A_1[1]}K_{\beta_1}^*=v^{(\beta_1,\hat{A}_0-\hat{A}_1)}K_{\beta_1}^*u_{A_0\oplus A_1[1]}.
\end{flalign}

$(2)$~In Definition \ref{maindef}, since $\hat{M}_0=\hat{A}_0+\hat{B}_0-\hat{I}_0-\hat{I}_1$ and $\hat{M}_1=\hat{A}_1+\hat{B}_1-\hat{I}_0-\hat{I}_1$, we have that
$\hat{M}_0-\hat{M}_1=\hat{A}_0-\hat{A}_1+\hat{B}_0-\hat{B}_1$. Hence, the Hall algebra $\mathcal {D}\mathcal {H}_2(\A)$ is a $K(\A)$-graded algebra with the grade of $K_{\alpha}K_{\beta}^\ast u_{M_0\oplus M_1[1]}$ defined to be $\hat{M}_0-\hat{M}_1$.
\end{remark}

\begin{theorem}
The $2$-periodic extended derived Hall algebra $\mathcal {D}\mathcal {H}_2(\A)$ is an associative algebra.
\end{theorem}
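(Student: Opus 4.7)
My strategy is to reduce associativity in $\mathcal{D}\mathcal{H}_2(\A)$ to the known associativity of the derived Hall algebra $\mathcal{D}\mathcal{H}(\A)$, using Corollary~\ref{tuilun2.7} as the translation device. The commutation relations in Remark~\ref{dyjx}(1) show that every $K_{\alpha}$ and every $K_{\beta}^{\ast}$ commutes with any basis element up to an explicit power of $v$ that depends only on the $K(\A)$-grading $\hat{M}_0 - \hat{M}_1$ recorded in Remark~\ref{dyjx}(2). Since this grading is additive under multiplication, the $K$-parts of $(xy)z$ and $x(yz)$ match term by term, and it suffices to prove the equality for three pure elements $x = u_{A_0 \oplus A_1[1]}$, $y = u_{B_0 \oplus B_1[1]}$, $z = u_{C_0 \oplus C_1[1]}$.

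For these pure elements I would expand both $(xy)z$ and $x(yz)$ via Definition~\ref{maindef} and apply Corollary~\ref{tuilun2.7} to each appearing $H$-number, converting it into a four-fold derived Hall number $F_{\cdot,\cdot,\cdot,\cdot}$ in $\mathcal{D}\mathcal{H}(\A)$ up to an explicit $v$-power and ratios of automorphism orders. After two iterations of the multiplication, the coefficient of each final basis element $K_{\cdot} K_{\cdot}^{\ast} u_{N_0 \oplus N_1[1]}$ becomes a sum, indexed by intermediate objects $(I_0, I_1, J_0, J_1)$, of products of eight derived Hall numbers in $\mathcal{D}\mathcal{H}(\A)$. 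Because $\A$ is hereditary, $\Hom_{D^b(\A)}(X[i], Y[j])$ vanishes when $|i-j| > 1$, so these eight factors split cleanly into an ``even'' four-fold group involving $A_0, B_0, C_0$ and the shifts $I_{\bullet}[\pm 1], J_{\bullet}[\pm 1]$, and an ``odd'' four-fold group involving $A_1, B_1, C_1$ and the complementary shifts. Applying the associativity identity \eqref{associativity} of $\mathcal{D}\mathcal{H}(\A)$ to each group separately should collapse both bracketings into the same sum over the final data $(N_0, N_1)$.

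The main obstacle will be verifying that the accumulated $v$-powers match on the two sides. This requires carefully collecting the twist $v^{a_0}$ from Definition~\ref{maindef}, the commutation factors from Remark~\ref{dyjx}(1), the conversion exponents from Corollary~\ref{tuilun2.7}, and any factors arising from rewriting $\mu_{X[1]} \mu_{Y[-1]}$ via Lemma~\ref{diyiy}(1). The bookkeeping reduces to bilinearity identities for $\lr{\cdot,\cdot}$ and $(\cdot,\cdot)$ together with the telescoping $\hat{N}_0 - \hat{N}_1 = \hat{A}_0 - \hat{A}_1 + \hat{B}_0 - \hat{B}_1 + \hat{C}_0 - \hat{C}_1$. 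Should any genuine crossing between the even and odd coordinates survive the split, reflecting the interplay between the triangles \eqref{sanjiao1} and \eqref{sanjiao2}, Green's formula would be invoked to finish the comparison; but I expect that the separated $\Hom$-structure in the hereditary derived category avoids this and leaves only routine Euler-form manipulations.
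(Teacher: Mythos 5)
Your reduction to pure elements via the $K(\A)$-grading and your use of Corollary~\ref{tuilun2.7} to convert the $H$-numbers into derived Hall numbers both match the paper's opening moves. But the core of your plan --- that after splitting into an even and an odd group, the associativity formula \eqref{associativity} of $\mathcal{D}\mathcal{H}(\A)$ alone collapses both bracketings, with Green's formula held in reserve as an unlikely contingency --- does not work, and this is a genuine gap. The product in $\mathcal{D}\mathcal{H}_2(\A)$ is not the restriction of the product of $\mathcal{D}\mathcal{H}(\A)$: by Definition~\ref{maindef} its structure constants carry an extra sum over $(I_0,I_1)$ of Hall numbers of the \emph{modified} objects $I_0[1]\oplus A_0$ and $B_0\oplus I_1[-1]$, which is precisely why associativity is not automatic. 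Concretely, when you expand $(u_{A_0\oplus A_1[1]}u_{B_0\oplus B_1[1]})u_{C_0\oplus C_1[1]}$ and apply Corollaries~\ref{tuilun2.7} and~\ref{tuilun1}, the intermediate object $X_0$ of the first product appears once as the \emph{top} of an extension $F_{N_0,L_0}^{X_0}$ (from forming $X_0$ out of $A_0,B_0$) and once again as the \emph{top} of $F_{J_1,S_0}^{X_0}$ (from decomposing $X_0$ against $J_1$ inside the second product). One is left with $\sum_{[X_0]}F_{N_0,L_0}^{X_0}F_{J_1,S_0}^{X_0}\frac{1}{a_{X_0}}$, where $X_0$ is the upper index of \emph{both} factors. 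Formula \eqref{associativity} only telescopes sums in which the intermediate object occurs once as an upper and once as a lower index; the pattern $\sum_{[L]}g^{L}g^{L}/a_L$ is exactly the left-hand side of Green's formula, and rewriting it (on both bracketings, for $X_0,X_1$ on one side and $Y_0,Y_1$ on the other) so that \eqref{associativity} becomes applicable is the heart of the paper's proof, not an optional patch.

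A secondary point: the ``crossing'' you flag as the potential trigger for Green's formula --- $I_0$ and $I_1$ each appearing in both the degree-$0$ and degree-$1$ Hall numbers --- is not where the difficulty lies; that coupling is absorbed by the $v$-power bookkeeping and the substitutions $\hat{X}_0=\hat{A}_0+\hat{B}_0-\hat{I}_0-\hat{I}_1$, $\hat{X}_1=\hat{A}_1+\hat{B}_1-\hat{I}_0-\hat{I}_1$. The even and odd coordinates do run in parallel exactly as you predict; the obstruction sits entirely \emph{within} each coordinate, in the double appearance of the intermediate cone described above. So to complete the argument you must promote Green's formula from fallback to main tool, apply it once in each coordinate on each side, and only then invoke \eqref{associativity} and compare the exponents of $v$ and $q$ --- which is the remaining (genuinely lengthy) bilinear-form verification you correctly anticipate.
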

\begin{proof}
Since $\mathcal {D}\mathcal {H}_2(\A)$ is $K(\A)$-graded, it is easy to see that it suffices to prove the equation $$(u_{A_0\oplus A_1[1]}u_{B_0\oplus B_1[1]})u_{C_0\oplus C_1[1]}=u_{A_0\oplus A_1[1]}(u_{B_0\oplus B_1[1]}u_{C_0\oplus C_1[1]})$$ without involving $K$-elements, for any $[A_i],[B_i],[C_i]\in{\rm Iso}(\A), i=0,1$.

On the one hand,
\begin{flalign*}&(u_{A_0\oplus A_1[1]}u_{B_0\oplus B_1[1]})u_{C_0\oplus C_1[1]}=\\&
\sum\limits_{[I_0],[I_1], [X_0],[X_1]}v^a\frac{H_{I_0[1]\oplus A_0,B_0\oplus I_1[-1]}^{X_0}H_{I_1[1]\oplus A_1,B_1\oplus I_0[-1]}^{X_1}}{a_{I_0}a_{I_1}}K_{\hat{I}_0}K_{\hat{I}_1}^\ast u_{X_0\oplus X_1[1]}u_{C_0\oplus C_1[1]},\end{flalign*}
where $a=\lr{\hat{I}_1-\hat{I}_0,\hat{A}_0-\hat{A}_1+\hat{B}_0-\hat{B}_1}+\lr{\hat{A}_0,\hat{B}_0}+\lr{\hat{A}_1,\hat{B}_1}$.
Thus, \begin{flalign*}&(u_{A_0\oplus A_1[1]}u_{B_0\oplus B_1[1]})u_{C_0\oplus C_1[1]}=\\&
\sum\limits_{[I_0],[I_1], [X_0],[X_1]\atop [J_0],[J_1],[M_0],[M_1]}v^b\frac{H_{I_0[1]\oplus A_0,B_0\oplus I_1[-1]}^{X_0}H_{I_1[1]\oplus A_1,B_1\oplus I_0[-1]}^{X_1}}{a_{I_0}a_{I_1}}\frac{H_{J_0[1]\oplus X_0,C_0\oplus J_1[-1]}^{M_0}H_{J_1[1]\oplus X_1,C_1\oplus J_0[-1]}^{M_1}}{a_{J_0}a_{J_1}}\\&\quad\quad\quad\quad\quad\quad K_{\hat{I}_0+\hat{J}_0}K_{\hat{I}_1+\hat{J}_1}^\ast u_{M_0\oplus M_1[1]},\end{flalign*}
where $b=a+\lr{\hat{J}_1-\hat{J}_0,\hat{X}_0-\hat{X}_1+\hat{C}_0-\hat{C}_1}+\lr{\hat{X}_0,\hat{C}_0}+\lr{\hat{X}_1,\hat{C}_1}$.

Using Corollary \ref{tuilun2.7}, we obtain that
\begin{flalign*}&(u_{A_0\oplus A_1[1]}u_{B_0\oplus B_1[1]})u_{C_0\oplus C_1[1]}=\\&
\sum\limits_{[I_0],[I_1], [X_0],[X_1]\atop [J_0],[J_1],[M_0],[M_1]}v^bq^c\frac{a_{A_0}a_{B_0}a_{I_0}a_{I_1}}{a_{X_0}}F_{A_0,I_1[-1],I_0[1],B_0}^{X_0}
\frac{a_{A_1}a_{B_1}a_{I_0}a_{I_1}}{a_{X_1}}F_{A_1,I_0[-1],I_1[1],B_1}^{X_1}\frac{1}{a_{I_0}a_{I_1}}\\
&\quad\quad\quad\quad\frac{a_{X_0}a_{C_0}a_{J_0}a_{J_1}}{a_{M_0}}F_{X_0,J_1[-1],J_0[1],C_0}^{M_0}
\frac{a_{X_1}a_{C_1}a_{J_0}a_{J_1}}{a_{M_1}}F_{X_1,J_0[-1],J_1[1],C_1}^{M_1}\frac{1}{a_{J_0}a_{J_1}}\\&
\quad\quad\quad\quad K_{\hat{I}_0+\hat{J}_0}K_{\hat{I}_1+\hat{J}_1}^\ast u_{M_0\oplus M_1[1]},\end{flalign*}
where $c=\lr{\hat{I}_1,\hat{I}_0}-\lr{\hat{A}_0,\hat{I}_0}-\lr{\hat{I}_1,\hat{B}_0}+\lr{\hat{I}_0,\hat{I}_1}-\lr{\hat{A}_1,\hat{I}_1}-\lr{\hat{I}_0,\hat{B}_1}
+\lr{\hat{J}_1,\hat{J}_0}-\lr{\hat{X}_0,\hat{J}_0}-\lr{\hat{J}_1,\hat{C}_0}+\lr{\hat{J}_0,\hat{J}_1}-\lr{\hat{X}_1,\hat{J}_1}-\lr{\hat{J}_0,\hat{C}_1}$.

Using Corollary \ref{tuilun1}, we obtain that
\begin{flalign*}&(u_{A_0\oplus A_1[1]}u_{B_0\oplus B_1[1]})u_{C_0\oplus C_1[1]}=\\&
a_{A_0}a_{B_0}a_{C_0}a_{A_1}a_{B_1}a_{C_1}\sum\limits_{[I_0],[I_1], [X_0],[X_1]\atop [J_0],[J_1],[M_0],[M_1]}v^bq^c\frac{a_{I_0}a_{I_1}a_{J_0}a_{J_1}}{a_{M_0}a_{M_1}}\\&
(\sum\limits_{[N_0],[L_0]}\frac{a_{N_0}a_{L_0}}{a_{A_0}a_{B_0}}F_{I_1,N_0}^{A_0}F_{L_0,I_0}^{B_0}F_{N_0,L_0}^{X_0})
(\sum\limits_{[N_1],[L_1]}\frac{a_{N_1}a_{L_1}}{a_{A_1}a_{B_1}}F_{I_0,N_1}^{A_1}F_{L_1,I_1}^{B_1}F_{N_1,L_1}^{X_1})\\&
(\sum\limits_{[S_0],[T_0]}\frac{a_{S_0}a_{T_0}}{a_{X_0}a_{C_0}}F_{J_1,S_0}^{X_0}F_{T_0,J_0}^{C_0}F_{S_0,T_0}^{M_0})
(\sum\limits_{[S_1],[T_1]}\frac{a_{S_1}a_{T_1}}{a_{X_1}a_{C_1}}F_{J_0,S_1}^{X_1}F_{T_1,J_1}^{C_1}F_{S_1,T_1}^{M_1})
\\&\quad\quad\quad\quad K_{\hat{I}_0+\hat{J}_0}K_{\hat{I}_1+\hat{J}_1}^\ast u_{M_0\oplus M_1[1]}.\end{flalign*}

Noting that $\hat{X}_0=\hat{A}_0+\hat{B}_0-\hat{I}_0-\hat{I}_1$ and $\hat{X}_1=\hat{A}_1+\hat{B}_1-\hat{I}_0-\hat{I}_1$, we have that
\begin{flalign*}&(u_{A_0\oplus A_1[1]}u_{B_0\oplus B_1[1]})u_{C_0\oplus C_1[1]}=\\&
\sum\limits_{[I_0],[I_1],[J_0],[J_1],[M_0], [M_1], [N_0]\atop [L_0],[N_1],[L_1],[S_0],[T_0],[S_1],[T_1]}v^bq^c\frac{a_{I_0}a_{I_1}a_{J_0}a_{J_1}a_{N_0}a_{L_0}a_{N_1}a_{L_1}a_{S_0}a_{T_0}a_{S_1}a_{T_1}}{a_{M_0}a_{M_1}}\\&
F_{I_1,N_0}^{A_0}F_{L_0,I_0}^{B_0}F_{T_0,J_0}^{C_0}F_{I_0,N_1}^{A_1}F_{L_1,I_1}^{B_1}F_{T_1,J_1}^{C_1}F_{S_0,T_0}^{M_0}F_{S_1,T_1}^{M_1}\sum\limits_{[X_0]}F_{N_0,L_0}^{X_0}F_{J_1,S_0}^{X_0}\frac{1}{a_{X_0}}\\&
\sum\limits_{[X_1]}F_{N_1,L_1}^{X_1}F_{J_0,S_1}^{X_1}\frac{1}{a_{X_1}}
K_{\hat{I}_0+\hat{J}_0}K_{\hat{I}_1+\hat{J}_1}^\ast u_{M_0\oplus M_1[1]}.\end{flalign*}

Using Green's formula, we have that
\begin{flalign*}
\sum\limits_{[X_0]}F_{N_{0},L_{0}}^{X_0}F_{J_{1},S_0}^{X_0}\frac{1}{a_{X_0}}=\sum\limits_{[U_0],[V_0],[W_0],[Z_0]}
q^{-\lr{\hat{U}_0,\hat{Z}_0}}F_{U_0,V_0}^{N_0}F_{W_0,Z_0}^{L_0}F_{U_0,W_0}^{J_{1}}F_{V_0,Z_0}^{S_0}\frac{a_{U_0}a_{V_0}a_{W_0}a_{Z_0}}{a_{N_0}a_{L_0}a_{J_{1}}a_{S_0}}
\end{flalign*}
and
\begin{flalign*}
\sum\limits_{[X_1]}F_{N_{1},L_{1}}^{X_1}F_{J_{0},S_1}^{X_1}\frac{1}{a_{X_1}}=\sum\limits_{[U_1],[V_1],[W_1],[Z_1]}
q^{-\lr{\hat{U}_1,\hat{Z}_1}}F_{U_1,V_1}^{N_1}F_{W_1,Z_1}^{L_1}F_{U_1,W_1}^{J_{0}}F_{V_1,Z_1}^{S_1}\frac{a_{U_1}a_{V_1}a_{W_1}a_{Z_1}}{a_{N_1}a_{L_1}a_{J_{0}}a_{S_1}}.
\end{flalign*}

Hence, we obtain that
\begin{flalign*}&(u_{A_0\oplus A_1[1]}u_{B_0\oplus B_1[1]})u_{C_0\oplus C_1[1]}=\\&
\sum\limits_{[I_0],[I_1],[J_0],[J_1],[M_0], [M_1], [N_0], [L_0],[N_1],[L_1],[S_0]\atop [T_0],[S_1],[T_1],[U_0],[V_0],[W_0],[Z_0],[U_1],[V_1],[W_1],[Z_1]}v^bq^{c'}\frac{a_{I_0}a_{I_1}a_{T_0}a_{T_1}a_{U_0}a_{V_0}a_{W_0}a_{Z_0}a_{U_1}a_{V_1}a_{W_1}a_{Z_1}}{a_{M_0}a_{M_1}}\\&
\quad\quad\quad\quad\quad  F_{I_1,N_0}^{A_0}F_{L_0,I_0}^{B_0}F_{T_0,J_0}^{C_0}F_{I_0,N_1}^{A_1}F_{L_1,I_1}^{B_1}
F_{T_1,J_1}^{C_1}F_{S_0,T_0}^{M_0}F_{S_1,T_1}^{M_1}F_{U_0,V_0}^{N_0}F_{W_0,Z_0}^{L_0}\\&
\quad\quad\quad\quad\quad  F_{U_0,W_0}^{J_{1}}F_{V_0,Z_0}^{S_0}F_{U_1,V_1}^{N_1}F_{W_1,Z_1}^{L_1}F_{U_1,W_1}^{J_{0}}F_{V_1,Z_1}^{S_1}
K_{\hat{I}_0+\hat{J}_0}K_{\hat{I}_1+\hat{J}_1}^\ast u_{M_0\oplus M_1[1]},\end{flalign*}
where $c'=c-\lr{\hat{U}_0,\hat{Z}_0}-\lr{\hat{U}_1,\hat{Z}_1}$.

Noting that $\hat{J}_0=\hat{U}_1+\hat{W}_1$, $\hat{J}_1=\hat{U}_0+\hat{W}_0$ and
applying the associativity formula (\ref{associativity}),
we get that
\begin{equation}\label{zuobian}
\begin{split}&(u_{A_0\oplus A_1[1]}u_{B_0\oplus B_1[1]})u_{C_0\oplus C_1[1]}=\\&
\sum\limits_{[I_0],[I_1],[M_0], [M_1], [T_0],[T_1],[U_0]\atop[V_0],[W_0],[Z_0],[U_1],[V_1],[W_1],[Z_1]}v^bq^{c'}\frac{a_{I_0}a_{I_1}a_{T_0}a_{T_1}a_{U_0}a_{V_0}a_{W_0}a_{Z_0}a_{U_1}a_{V_1}a_{W_1}a_{Z_1}}{a_{M_0}a_{M_1}}\\&
F_{I_1,U_0,V_0}^{A_0}F_{W_0,Z_0,I_0}^{B_0}F_{T_0,U_1,W_1}^{C_0}F_{I_0,U_1,V_1}^{A_1}F_{W_1,Z_1,I_1}^{B_1}F_{T_1,U_0,W_0}^{C_1}F_{V_0,Z_0,T_0}^{M_0}F_{V_1,Z_1,T_1}^{M_1}\\&
\quad\quad\quad\quad\quad\quad\quad\quad\quad\quad K_{\hat{I}_0+\hat{U}_1+\hat{W}_1}K_{\hat{I}_1+\hat{U}_0+\hat{W}_0}^\ast u_{M_0\oplus M_1[1]}.\end{split}\end{equation}

On the other hand,
\begin{flalign*}&u_{A_0\oplus A_1[1]}(u_{B_0\oplus B_1[1]}u_{C_0\oplus C_1[1]})=\\&
\sum\limits_{[J_0],[J_1], [Y_0],[Y_1]}v^{x_0}\frac{H_{J_0[1]\oplus B_0,C_0\oplus J_1[-1]}^{Y_0}H_{J_1[1]\oplus B_1,C_1\oplus J_0[-1]}^{Y_1}}{a_{J_0}a_{J_1}}u_{A_0\oplus A_1[1]}K_{\hat{J}_0}K_{\hat{J}_1}^\ast u_{Y_0\oplus Y_1[1]},\end{flalign*}
where $x_0=\lr{\hat{J}_1-\hat{J}_0,\hat{B}_0-\hat{B}_1+\hat{C}_0-\hat{C}_1}+\lr{\hat{B}_0,\hat{C}_0}+\lr{\hat{B}_1,\hat{C}_1}$.
Noting that $$u_{A_0\oplus A_1[1]}K_{\hat{J}_0}K_{\hat{J}_1}^\ast=v^{(\hat{J}_1-\hat{J}_0,\hat{A}_0-\hat{A}_1)}K_{\hat{J}_0}K_{\hat{J}_1}^\ast u_{A_0\oplus A_1[1]},$$
we obtain that
\begin{flalign*}&u_{A_0\oplus A_1[1]}(u_{B_0\oplus B_1[1]}u_{C_0\oplus C_1[1]})=\\&
\sum\limits_{[J_0],[J_1], [Y_0],[Y_1]}v^{x}\frac{H_{J_0[1]\oplus B_0,C_0\oplus J_1[-1]}^{Y_0}H_{J_1[1]\oplus B_1,C_1\oplus J_0[-1]}^{Y_1}}{a_{J_0}a_{J_1}}K_{\hat{J}_0}K_{\hat{J}_1}^\ast u_{A_0\oplus A_1[1]} u_{Y_0\oplus Y_1[1]},\end{flalign*}
where $x=x_0+(\hat{J}_1-\hat{J}_0,\hat{A}_0-\hat{A}_1)$.
Thus, \begin{flalign*}&u_{A_0\oplus A_1[1]}(u_{B_0\oplus B_1[1]}u_{C_0\oplus C_1[1]})=\\&
\sum\limits_{[J_0],[J_1], [Y_0],[Y_1]\atop [I_0],[I_1],[M_0],[M_1]}v^y\frac{H_{J_0[1]\oplus B_0,C_0\oplus J_1[-1]}^{Y_0}H_{J_1[1]\oplus B_1,C_1\oplus J_0[-1]}^{Y_1}}{a_{J_0}a_{J_1}}\frac{H_{I_0[1]\oplus A_0,Y_0\oplus I_1[-1]}^{M_0}H_{I_1[1]\oplus A_1,Y_1\oplus I_0[-1]}^{M_1}}{a_{I_0}a_{I_1}}\\&\quad\quad\quad\quad\quad\quad K_{\hat{I}_0+\hat{J}_0}K_{\hat{I}_1+\hat{J}_1}^\ast u_{M_0\oplus M_1[1]},\end{flalign*}
where $y=x+\lr{\hat{I}_1-\hat{I}_0,\hat{A}_0-\hat{A}_1+\hat{Y}_0-\hat{Y}_1}+\lr{\hat{A}_0,\hat{Y}_0}+\lr{\hat{A}_1,\hat{Y}_1}$.

Using Corollary \ref{tuilun2.7}, we obtain that
\begin{flalign*}&u_{A_0\oplus A_1[1]}(u_{B_0\oplus B_1[1]}u_{C_0\oplus C_1[1]})=\\&
\sum\limits_{[J_0],[J_1], [Y_0],[Y_1]\atop [I_0],[I_1],[M_0],[M_1]}v^yq^z\frac{a_{B_0}a_{C_0}a_{J_0}a_{J_1}}{a_{Y_0}}F_{B_0,J_1[-1],J_0[1],C_0}^{Y_0}
\frac{a_{B_1}a_{C_1}a_{J_0}a_{J_1}}{a_{Y_1}}F_{B_1,J_0[-1],J_1[1],C_1}^{Y_1}\frac{1}{a_{J_0}a_{J_1}}\\
&\quad\quad\quad\quad\frac{a_{A_0}a_{Y_0}a_{I_0}a_{I_1}}{a_{M_0}}F_{A_0,I_1[-1],I_0[1],Y_0}^{M_0}
\frac{a_{A_1}a_{Y_1}a_{I_0}a_{I_1}}{a_{M_1}}F_{A_1,I_0[-1],I_1[1],Y_1}^{M_1}\frac{1}{a_{I_0}a_{I_1}}\\&
\quad\quad\quad\quad\quad\quad\quad\quad K_{\hat{I}_0+\hat{J}_0}K_{\hat{I}_1+\hat{J}_1}^\ast u_{M_0\oplus M_1[1]},\end{flalign*}
where $z=\lr{\hat{J}_1,\hat{J}_0}-\lr{\hat{B}_0,\hat{J}_0}-\lr{\hat{J}_1,\hat{C}_0}+\lr{\hat{J}_0,\hat{J}_1}-\lr{\hat{B}_1,\hat{J}_1}-\lr{\hat{J}_0,\hat{C}_1}
+\lr{\hat{I}_1,\hat{I}_0}-\lr{\hat{A}_0,\hat{I}_0}-\lr{\hat{I}_1,\hat{Y}_0}+\lr{\hat{I}_0,\hat{I}_1}-\lr{\hat{A}_1,\hat{I}_1}-\lr{\hat{I}_0,\hat{Y}_1}$.

Using Corollary \ref{tuilun1}, we obtain that
\begin{flalign*}&u_{A_0\oplus A_1[1]}(u_{B_0\oplus B_1[1]}u_{C_0\oplus C_1[1]})=\\&
a_{A_0}a_{B_0}a_{C_0}a_{A_1}a_{B_1}a_{C_1}\sum\limits_{[I_0],[I_1], [Y_0],[Y_1]\atop [J_0],[J_1],[M_0],[M_1]}v^yq^z\frac{a_{I_0}a_{I_1}a_{J_0}a_{J_1}}{a_{M_0}a_{M_1}}\\&
(\sum\limits_{[N_0],[L_0]}\frac{a_{N_0}a_{L_0}}{a_{A_0}a_{Y_0}}F_{I_1,N_0}^{A_0}F_{L_0,I_0}^{Y_0}F_{N_0,L_0}^{M_0})
(\sum\limits_{[N_1],[L_1]}\frac{a_{N_1}a_{L_1}}{a_{A_1}a_{Y_1}}F_{I_0,N_1}^{A_1}F_{L_1,I_1}^{Y_1}F_{N_1,L_1}^{M_1})\\&
(\sum\limits_{[S_0],[T_0]}\frac{a_{S_0}a_{T_0}}{a_{B_0}a_{C_0}}F_{J_1,S_0}^{B_0}F_{T_0,J_0}^{C_0}F_{S_0,T_0}^{Y_0})
(\sum\limits_{[S_1],[T_1]}\frac{a_{S_1}a_{T_1}}{a_{B_1}a_{C_1}}F_{J_0,S_1}^{B_1}F_{T_1,J_1}^{C_1}F_{S_1,T_1}^{Y_1})
\\&\quad\quad\quad\quad K_{\hat{I}_0+\hat{J}_0}K_{\hat{I}_1+\hat{J}_1}^\ast u_{M_0\oplus M_1[1]}.\end{flalign*}

Noting that $\hat{Y}_0=\hat{B}_0+\hat{C}_0-\hat{J}_0-\hat{J}_1$ and $\hat{Y}_1=\hat{B}_1+\hat{C}_1-\hat{J}_0-\hat{J}_1$, we have that
\begin{flalign*}&u_{A_0\oplus A_1[1]}(u_{B_0\oplus B_1[1]}u_{C_0\oplus C_1[1]})=\\&
\sum\limits_{[I_0],[I_1],[J_0],[J_1],[M_0], [M_1], [N_0]\atop [L_0],[N_1],[L_1],[S_0],[T_0],[S_1],[T_1]}v^yq^z\frac{a_{I_0}a_{I_1}a_{J_0}a_{J_1}a_{N_0}a_{L_0}a_{N_1}a_{L_1}a_{S_0}a_{T_0}a_{S_1}a_{T_1}}{a_{M_0}a_{M_1}}\\&
F_{I_1,N_0}^{A_0}F_{J_1,S_0}^{B_0}F_{T_0,J_0}^{C_0}F_{I_0,N_1}^{A_1}F_{J_0,S_1}^{B_1}F_{T_1,J_1}^{C_1}F_{N_0,L_0}^{M_0}F_{N_1,L_1}^{M_1}\sum\limits_{[Y_0]}F_{L_0,I_0}^{Y_0}F_{S_0,T_0}^{Y_0}\frac{1}{a_{Y_0}}\\&
\sum\limits_{[Y_1]}F_{L_1,I_1}^{Y_1}F_{S_1,T_1}^{Y_1}\frac{1}{a_{Y_1}}
K_{\hat{I}_0+\hat{J}_0}K_{\hat{I}_1+\hat{J}_1}^\ast u_{M_0\oplus M_1[1]}.\end{flalign*}

Using Green's formula, we have that
\begin{flalign*}
\sum\limits_{[Y_0]}F_{L_{0},I_{0}}^{Y_0}F_{S_{0},T_0}^{Y_0}\frac{1}{a_{Y_0}}=\sum\limits_{[K_0],[P_0],[Q_0],[R_0]}
q^{-\lr{\hat{K}_0,\hat{R}_0}}F_{K_0,P_0}^{L_0}F_{Q_0,R_0}^{I_0}F_{K_0,Q_0}^{S_{0}}F_{P_0,R_0}^{T_0}\frac{a_{K_0}a_{P_0}a_{Q_0}a_{R_0}}{a_{L_0}a_{I_0}a_{S_{0}}a_{T_0}}
\end{flalign*}
and
\begin{flalign*}
\sum\limits_{[Y_1]}F_{L_{1},I_{1}}^{Y_1}F_{S_{1},T_1}^{Y_1}\frac{1}{a_{Y_1}}=\sum\limits_{[K_1],[P_1],[Q_1],[R_1]}
q^{-\lr{\hat{K}_1,\hat{R}_1}}F_{K_1,P_1}^{L_1}F_{Q_1,R_1}^{I_1}F_{K_1,Q_1}^{S_{1}}F_{P_1,R_1}^{T_1}\frac{a_{K_1}a_{P_1}a_{Q_1}a_{R_1}}{a_{L_1}a_{I_1}a_{S_{1}}a_{T_1}}.
\end{flalign*}

Hence, we obtain that
\begin{flalign*}&u_{A_0\oplus A_1[1]}(u_{B_0\oplus B_1[1]}u_{C_0\oplus C_1[1]})=\\&
\sum\limits_{[I_0],[I_1],[J_0],[J_1],[M_0], [M_1], [N_0], [L_0],[N_1],[L_1],[S_0]\atop [T_0],[S_1],[T_1],[K_0],[P_0],[Q_0],[R_0],[K_1],[P_1],[Q_1],[R_1]}v^yq^{z'}\frac{a_{J_0}a_{J_1}a_{N_0}a_{N_1}a_{K_0}a_{P_0}a_{Q_0}a_{R_0}a_{K_1}a_{P_1}a_{Q_1}a_{R_1}}{a_{M_0}a_{M_1}}\\&
\quad\quad\quad\quad\quad  F_{I_1,N_0}^{A_0}F_{J_1,S_0}^{B_0}F_{T_0,J_0}^{C_0}F_{I_0,N_1}^{A_1}F_{J_0,S_1}^{B_1}
F_{T_1,J_1}^{C_1}F_{N_0,L_0}^{M_0}F_{N_1,L_1}^{M_1}F_{K_0,P_0}^{L_0}F_{Q_0,R_0}^{I_0}\\&
\quad\quad\quad\quad\quad  F_{K_0,Q_0}^{S_{0}}F_{P_0,R_0}^{T_0}F_{K_1,P_1}^{L_1}F_{Q_1,R_1}^{I_1}F_{K_1,Q_1}^{S_{1}}F_{P_1,R_1}^{T_1}
K_{\hat{I}_0+\hat{J}_0}K_{\hat{I}_1+\hat{J}_1}^\ast u_{M_0\oplus M_1[1]},\end{flalign*}
where $z'=z-\lr{\hat{K}_0,\hat{R}_0}-\lr{\hat{K}_1,\hat{R}_1}$.

Noting that $\hat{I}_0=\hat{Q}_0+\hat{R}_0$, $\hat{I}_1=\hat{Q}_1+\hat{R}_1$ and
applying the associativity formula (\ref{associativity}),
we get that
\begin{equation}\label{youbian}
\begin{split}&u_{A_0\oplus A_1[1]}(u_{B_0\oplus B_1[1]}u_{C_0\oplus C_1[1]})=\\&
\sum\limits_{[J_0],[J_1],[M_0], [M_1], [N_0],[N_1],[K_0]\atop[P_0],[Q_0],[R_0],[K_1],[P_1],[Q_1],[R_1]}v^yq^{z'}\frac{a_{J_0}a_{J_1}a_{N_0}a_{N_1}a_{K_0}a_{P_0}a_{Q_0}a_{R_0}a_{K_1}a_{P_1}a_{Q_1}a_{R_1}}{a_{M_0}a_{M_1}}\\&
F_{Q_1,R_1,N_0}^{A_0}F_{J_1,K_0,Q_0}^{B_0}F_{P_0,R_0,J_0}^{C_0}F_{Q_0,R_0,N_1}^{A_1}F_{J_0,K_1,Q_1}^{B_1}F_{P_1,R_1,J_1}^{C_1}F_{N_0,K_0,P_0}^{M_0}F_{N_1,K_1,P_1}^{M_1}\\&
\quad\quad\quad\quad\quad\quad\quad\quad\quad\quad K_{\hat{Q}_0+\hat{R}_0+\hat{J}_0}K_{\hat{Q}_1+\hat{R}_1+\hat{J}_1}^\ast u_{M_0\oplus M_1[1]}.\end{split}\end{equation}

Replacing the notations $I_0,I_1,T_0,T_1,U_0,V_0,W_0,Z_0,U_1,V_1,W_1,Z_1$ in (\ref{zuobian}) by $Q_0,Q_1,P_0$, $P_1,R_1,N_0,J_1,K_0,R_0,N_1,J_0,K_1$, respectively,
we conclude that all terms in (\ref{zuobian}) are the same as those in (\ref{youbian}) except the exponents of $v$ and $q$.

Now let us compare the exponents of $v$ and $q$ in (\ref{zuobian}) and (\ref{youbian}). Replacing the notations in (\ref{zuobian}) as above, we have that
\begin{equation}\label{lcommon2}
\begin{split}
b=&\lr{\hat{Q}_1-\hat{Q}_0,\hat{A}_0-\hat{A}_1+\hat{B}_0-\hat{B}_1}+\lr{\hat{A}_0,\hat{B}_0}+\lr{\hat{A}_1,\hat{B}_1}\\&+\lr{\hat{R}_1+\hat{J}_1-\hat{R}_0-\hat{J}_0,\hat{A}_0-\hat{A}_1+\hat{B}_0-\hat{B}_1+\hat{C}_0-\hat{C}_1}\\
&+\lr{\hat{A}_0+\hat{B}_0-\hat{Q}_0-\hat{Q}_1,\hat{C}_0}+\lr{\hat{A}_1+\hat{B}_1-\hat{Q}_0-\hat{Q}_1,\hat{C}_1}
\end{split}\end{equation}
and
\begin{equation}\label{lcommon3}
\begin{split}
c'=&\lr{\hat{Q}_1,\hat{Q}_0}-\lr{\hat{A}_0,\hat{Q}_0}-\lr{\hat{Q}_1,\hat{B}_0}+\lr{\hat{Q}_0,\hat{Q}_1}-\lr{\hat{A}_1,\hat{Q}_1}-\lr{\hat{Q}_0,\hat{B}_1}\\
&+\lr{\hat{R}_1+\hat{J}_1,\hat{R}_0+\hat{J}_0}-\lr{\hat{A}_0+\hat{B}_0-\hat{Q}_0-\hat{Q}_1,\hat{R}_0+\hat{J}_0}-\lr{\hat{R}_1+\hat{J}_1,\hat{C}_0}\\
&+\lr{\hat{R}_0+\hat{J}_0,\hat{R}_1+\hat{J}_1}-\lr{\hat{A}_1+\hat{B}_1-\hat{Q}_0-\hat{Q}_1,\hat{R}_1+\hat{J}_1}-\lr{\hat{R}_0+\hat{J}_0,\hat{C}_1}\\
&-\lr{\hat{R}_1,\hat{K}_0}-\lr{\hat{R}_0,\hat{K}_1}.
\end{split}\end{equation}
While,
\begin{equation}\label{rcommon2}
\begin{split}
y=&\lr{\hat{J}_1-\hat{J}_0,\hat{B}_0-\hat{B}_1+\hat{C}_0-\hat{C}_1}+\lr{\hat{B}_0,\hat{C}_0}+\lr{\hat{B}_1,\hat{C}_1}+(\hat{J}_1-\hat{J}_0,\hat{A}_0-\hat{A}_1)\\
&+\lr{\hat{Q}_1+\hat{R}_1-\hat{Q}_0-\hat{R}_0,\hat{A}_0-\hat{A}_1+\hat{B}_0-\hat{B}_1+\hat{C}_0-\hat{C}_1}\\
&+\lr{\hat{A}_0,\hat{B}_0+\hat{C}_0-\hat{J}_0-\hat{J}_1}+\lr{\hat{A}_1,\hat{B}_1+\hat{C}_1-\hat{J}_0-\hat{J}_1}
\end{split}\end{equation}
and
\begin{equation}\label{rcommon3}
\begin{split}
z'=&\lr{\hat{J}_1,\hat{J}_0}-\lr{\hat{B}_0,\hat{J}_0}-\lr{\hat{J}_1,\hat{C}_0}+\lr{\hat{J}_0,\hat{J}_1}-\lr{\hat{B}_1,\hat{J}_1}-\lr{\hat{J}_0,\hat{C}_1}\\
&+\lr{\hat{Q}_1+\hat{R}_1,\hat{Q}_0+\hat{R}_0}-\lr{\hat{A}_0,\hat{Q}_0+\hat{R}_0}-\lr{\hat{Q}_1+\hat{R}_1,\hat{B}_0+\hat{C}_0-\hat{J}_0-\hat{J}_1}\\
&+\lr{\hat{Q}_0+\hat{R}_0,\hat{Q}_1+\hat{R}_1}-\lr{\hat{A}_1,\hat{Q}_1+\hat{R}_1}-\lr{\hat{Q}_0+\hat{R}_0,\hat{B}_1+\hat{C}_1-\hat{J}_0-\hat{J}_1}\\
&-\lr{\hat{K}_0,\hat{R}_0}-\lr{\hat{K}_1,\hat{R}_1}.
\end{split}\end{equation}

By direct calculations, we get that
\begin{equation*}
b=-\lr{\hat{Q}_0,\hat{C}_1}-\lr{\hat{Q}_1,\hat{C}_0}+\text{Common~terms}~(\spadesuit)
\end{equation*}
and
\begin{equation*}
y=\lr{\hat{Q}_0,\hat{C}_1}+\lr{\hat{Q}_1,\hat{C}_0}-2\lr{\hat{A}_0,\hat{J}_0}-2\lr{\hat{A}_1,\hat{J}_1}+\text{Common~terms}~(\spadesuit),
\end{equation*}
where $(\spadesuit)$ denotes the common terms of $(\ref{lcommon2})$ and $(\ref{rcommon2})$.

Noting that $\hat{K}_0=\hat{B}_0-\hat{Q}_0-\hat{J}_1$ and $\hat{K}_1=\hat{B}_1-\hat{Q}_1-\hat{J}_0$, we obtain that
\begin{equation*}
c'=-\lr{\hat{A}_0,\hat{J}_0}-\lr{\hat{A}_1,\hat{J}_1}+\text{Common~terms}~(\clubsuit)
\end{equation*}
and
\begin{equation*}
z'=-\lr{\hat{Q}_1,\hat{C}_0}-\lr{\hat{Q}_0,\hat{C}_1}+\text{Common~terms}~(\clubsuit),
\end{equation*}
where $(\clubsuit)$ denotes the common terms of $(\ref{lcommon3})$ and $(\ref{rcommon3})$.

Therefore, we have that $b+2c'=y+2z'$ and complete the proof.
\end{proof}

\section{Realizations of Drinfeld double Hall algebras}
In this section, we show that the $2$-periodic extended derived Hall algebra $\mathcal {D}\mathcal {H}_2(\A)$ is isomorphic to the Drinfeld double Hall algebra $\mathcal {D}(\A)$.

By the definition of $\mathcal {D}\mathcal {H}_2(\A)$, we immediately get the following embeddings.
\begin{lemma}\label{embeds}
There exist two embeddings of algebras
$$i^+:{\mathcal{H}}_{\rm{tw}}^{\rm e}(\mathcal{A})\hookrightarrow \mathcal {D}\mathcal {H}_2(\A), u_{[M]}K_\alpha\mapsto u_{M}K_\alpha$$
and $$i^-:{\mathcal{H}}_{\rm{tw}}^{\rm e}(\mathcal{A})\hookrightarrow \mathcal {D}\mathcal {H}_2(\A), u_{[M]}K_\alpha\mapsto u_{M[1]}K_\alpha^\ast.$$
\end{lemma}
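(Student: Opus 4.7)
The plan is to check that $i^+$ and $i^-$ are algebra homomorphisms by directly specialising the multiplication in Definition \ref{maindef} and invoking Remark \ref{dyjx}; injectivity will then be automatic since the images sit inside the given $\mathbb{C}$-basis of $\mathcal {D}\mathcal {H}_2(\A)$.

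For $i^+$, I would specialise to $A_1 = B_1 = 0$ in the formula of Definition \ref{maindef}. The structure constant $H_{I_1[1], I_0[-1]}^{M_1}$ with $M_1 \in \A$ forces the underlying triangle $I_0[-1] \to M_1 \to I_1[1] \to I_0$ in $D^b(\A)$ to split, since $\Hom_{D^b(\A)}(I_1[1], I_0) = 0$ by heredity of $\A$; hence $M_1 \cong I_0[-1] \oplus I_1[1]$, which lies in $\A$ only when $I_0 = I_1 = 0$, forcing $M_1 = 0$. With these constraints the whole sum collapses to $v^{\langle \hat{M}, \hat{N}\rangle} \sum_{[L]} H_{M,N}^L u_L$, and because $\{M,N\} = 1$ for $M, N \in \A$ (again by heredity) one has $H_{M,N}^L = |\Ext^1_\A(M,N)_L|/|\Hom_\A(M,N)|$, recovering the Ringel--Hall product $u_{[M]} u_{[N]}$. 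The symmetric specialisation $A_0 = B_0 = 0$ handles $i^-$, with the analogous splitting argument for $H_{I_0[1], I_1[-1]}^{M_0}$ forcing $I_0 = I_1 = 0 = M_0$ and reducing the product to $v^{\langle \hat{M}, \hat{N}\rangle} \sum_{[L]} H_{M,N}^L u_{L[1]}$, which equals $i^-(u_{[M]} u_{[N]})$.

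Compatibility with the $K$-generators is purely formal from Remark \ref{dyjx}: the identity $K_\alpha K_\beta = K_{\alpha+\beta}$ transports directly, and the relation $u_{A_0 \oplus A_1[1]} K_{\beta_0} = v^{-(\beta_0, \hat{A}_0 - \hat{A}_1)} K_{\beta_0} u_{A_0 \oplus A_1[1]}$ at $A_1 = 0$ recovers $K_\alpha u_{[M]} = v^{(\alpha, \hat{M})} u_{[M]} K_\alpha$; the parallel statement for $K^\ast$ at $A_0 = 0$ uses Remark \ref{dyjx}(3) in the same way. Injectivity is immediate because $\{u_M K_\alpha\}$ and $\{u_{M[1]} K_\alpha^\ast\}$ are each contained in the distinguished basis of $\mathcal {D}\mathcal {H}_2(\A)$. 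The main obstacle --- really the only step carrying any content --- is the heredity-driven collapse of the $[I_0], [I_1]$ double sum; every remaining verification is routine bookkeeping from Definition \ref{maindef} and Remark \ref{dyjx}.
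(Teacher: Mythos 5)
Your proposal is correct and follows essentially the same route as the paper: specialise Definition \ref{maindef} to $A_1=B_1=0$ (resp.\ $A_0=B_0=0$), observe that the factor $H_{I_1[1],I_0[-1]}^{M_1}$ (resp.\ $H_{I_0[1],I_1[-1]}^{M_0}$) forces $I_0=I_1=0$ and the extra component to vanish, identify $H_{M,N}^L$ with the Ringel--Hall structure constant, handle the $K$-generators via Remark \ref{dyjx}, and get injectivity from the basis. The only cosmetic difference is that you justify the collapse by the splitting of the relevant triangle (the connecting map lies in a vanishing negative Ext group — note this needs no heredity) where the paper invokes the long exact sequence (\ref{xyczhl}); the two justifications are interchangeable.
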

\begin{proof}
By Remark \ref{dyjx}, $K_{\alpha_0}K_{\beta_0}=K_{\alpha_0+\beta_0}$ and $K_{\alpha_1}^\ast K_{\beta_1}^\ast =K_{\alpha_1+\beta_1}^\ast$ for any $\alpha_0,\beta_0,\alpha_1,\beta_1\in K(\A)$. For any $\beta_0\in K(\A)$ and $[A_0],[A_1]\in\Iso(\A)$, we have that
$$u_{A_0}K_{\beta_0}=v^{-(\beta_0,\hat{A}_0)}K_{\beta_0}u_{A_0}~\text{and}~u_{A_1[1]}K_{\beta_0}=v^{(\beta_0,\hat{A}_1)}K_{\beta_0}u_{A_1[1]}.$$

For any $[A_0],[B_0],[A_1],[B_1]\in\Iso(\A)$, by Definition \ref{maindef} together with the exact sequence (\ref{xyczhl}), we have that
$$u_{A_0}u_{B_0}=\sum\limits_{[M_0]\in{\rm Iso}(\A)}v^{\lr{\hat{A}_0,\hat{B}_0}}H_{A_0,B_0}^{M_0}u_{M_0}~~\text{and}~~u_{A_1[1]}u_{B_1[1]}=\sum\limits_{[M_1]\in{\rm Iso}(\A)}v^{\lr{\hat{A}_1,\hat{B}_1}}H_{A_1,B_1}^{M_1}u_{M_1[1]}.$$
By Corollary \ref{tuilun2.7}, for any $L,M,N\in\A$ we have that $$H_{M,N}^L=\frac{a_Ma_N}{a_L}F_{M,N}^L=\frac{a_Ma_N}{a_L}g_{M,N}^L=\frac{{|\Ext_\mathcal{A}^1{{(M,N)}_L}|}}{{|\Hom_\mathcal{A}(M,N)|}}.$$

Taking all these together, we obtain that $i^+,i^-$ are homomorphisms of algebras. Since $i^+,i^-$ send a basis to a linearly independent set in $\mathcal {D}\mathcal {H}_2(\A)$, we conclude that they are injective.
\end{proof}

In what follows, we provide another basis for the $2$-periodic extended derived Hall algebra $\mathcal {D}\mathcal {H}_2(\A)$, called the {\em triangular basis}.
\begin{proposition}\label{sanjiaoji}
The $2$-periodic extended derived Hall algebra $\mathcal {D}\mathcal {H}_2(\A)$ has a basis given by
$$\{K_\alpha K_\beta^\ast u_{A_0}u_{B_1[1]}~|~\alpha,\beta\in K(\A),[A_0],[B_1]\in\Iso(\A)\}.$$
\end{proposition}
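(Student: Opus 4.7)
The strategy is to show that the proposed set is related to the standard basis $\{K_\alpha K_\beta^\ast u_{M_0\oplus M_1[1]}\}$ of Definition \ref{maindef} by a unitriangular change of variables. Since both indexing sets coincide, namely $K(\A) \times K(\A) \times {\rm Iso}(\A) \times {\rm Iso}(\A)$, a triangular expansion with $1$'s on the diagonal will automatically yield both spanning and linear independence.

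The first step is to compute $u_{A_0}u_{B_1[1]}$ directly from Definition \ref{maindef} by specializing $A_1=0$, $B_0=0$ and all $K$-parameters to $0$. Analyzing the triangles underlying the Hall numbers $H^{M_0}_{I_0[1]\oplus A_0,\,I_1[-1]}$ and $H^{M_1}_{I_1[1],\,B_1\oplus I_0[-1]}$ through the associated long exact sequences in $\A$ forces $I_0=0$ (the hereditary hypothesis ensures $\Hom_{D^b(\A)}(I_0[1], I_1[-1])=0$) and recovers the short exact sequences
$$0\to M_0\to A_0\to I_1\to 0\quad\text{and}\quad 0\to I_1\to B_1\to M_1\to 0.$$
The result is
$$u_{A_0}u_{B_1[1]}=\sum_{[I_1],[M_0],[M_1]}v^{\lr{\hat{I}_1,\hat{A}_0-\hat{B}_1}}\frac{H^{M_0}_{A_0,I_1[-1]}H^{M_1}_{I_1[1],B_1}}{a_{I_1}}K^\ast_{\hat{I}_1}u_{M_0\oplus M_1[1]}.$$

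The $I_1=0$ summand is the leading term: there $H^{M_0}_{A_0,0}=\delta_{[M_0],[A_0]}$, $H^{M_1}_{0,B_1}=\delta_{[M_1],[B_1]}$, the Euler exponent is $0$, and no $K^\ast$-factor appears, so this summand equals $u_{A_0\oplus B_1[1]}$ exactly. For each $I_1\neq 0$, the surviving contribution carries a nontrivial $K^\ast_{\hat{I}_1}$, and $\dim M_0+\dim M_1=\dim A_0+\dim B_1-2\dim I_1$ is strictly smaller than $\dim A_0+\dim B_1$. Multiplying on the left by $K_\alpha K_\beta^\ast$ via the commutation relations of Remark \ref{dyjx}(1) gives
$$K_\alpha K_\beta^\ast u_{A_0}u_{B_1[1]}=K_\alpha K_\beta^\ast u_{A_0\oplus B_1[1]}+\sum_{I_1\neq 0}(\cdots)\,K_\alpha K^\ast_{\beta+\hat{I}_1}u_{M_0\oplus M_1[1]},$$
with every lower-order term having strictly smaller $\dim M_0+\dim M_1$.

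To finish, I would filter $\mathcal{DH}_2(\A)$ by the ascending subspaces spanned by the standard basis elements $K_\alpha K_\beta^\ast u_{M_0\oplus M_1[1]}$ with $\dim M_0+\dim M_1\leq N$. The displayed expansion shows that, modulo the piece indexed by strictly smaller total dimension, the proposed basis element reduces to its standard counterpart $K_\alpha K_\beta^\ast u_{A_0\oplus B_1[1]}$. Since the two bases share the same index set, the induced transition on each associated graded piece is the identity, so the transition is invertible and the claim follows. The main obstacle is not the bookkeeping of the filtration but rather the careful derivation of the formula for $u_{A_0}u_{B_1[1]}$: one must justify $I_0=0$ via the hereditary assumption and correctly track the subscript shift $\beta\mapsto\beta+\hat{I}_1$ on the $K^\ast$-factor, since that shift is exactly what makes the filtration by total dimension—rather than a finer bigrading—the right tool to trivialize the transition matrix.
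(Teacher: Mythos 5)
Your overall strategy coincides with the paper's: expand $u_{A_0}u_{B_1[1]}$ via Definition \ref{maindef}, observe that $I_0=0$ is forced and that the $I_1=0$ term is exactly $u_{A_0\oplus B_1[1]}$, and then argue that the resulting transition to the standard basis is unitriangular with respect to a well-founded ordering. The expansion you obtain is precisely the paper's identity (4.1), and the identification of the two short exact sequences $0\to M_0\to A_0\to I_1\to 0$ and $0\to I_1\to B_1\to M_1\to 0$ is correct.

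The genuine gap is in the quantity you induct on. You filter by $\dim M_0+\dim M_1$ and use $\hat M_0+\hat M_1=\hat A_0+\hat B_1-2\hat I_1$ to conclude strict descent when $I_1\neq 0$. This requires an additive function on $K(\A)$ (or at least on short exact sequences) that is strictly positive on every nonzero object. The paper's $\A$ is an arbitrary essentially small finitary hereditary abelian category, explicitly including $\mathrm{Coh}(\mathbb{X})$ for a (weighted) projective line, where no such function exists: positivity on torsion sheaves forces the degree coefficient to be positive, and then the value on $\mathcal{O}(n)$ becomes negative for $n\ll 0$. So your filtration is not defined in the stated generality, and the "associated graded" argument collapses. (Note also that the naive alternative of ordering by "$M_0$ is a proper subobject of $A_0$" is not well-founded in such categories.) The paper's fix is to induct instead on $\delta:=\dim_k\Ext^1_{\mathcal{R}(\A)}(\widetilde A_0\oplus\widetilde B_1[1],\widetilde A_0\oplus\widetilde B_1[1])$, which by the decomposition (3.4) is a nonnegative integer, equals $0$ exactly when $\Hom_\A(A_0,B_1)=0$ (in which case there are no lower-order terms at all), and strictly decreases along each non-split triangle $\widetilde A_0\to\widetilde B_1\to\widetilde M_0[1]\oplus\widetilde M_1\to\widetilde A_0[1]$ arising from a term with $I_1\neq 0$, by \cite[Lemma 4.2]{RuanZ}. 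Replacing your dimension filtration by induction on $\delta$ repairs the argument; the rest of your unitriangularity reasoning then goes through as in the paper.
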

\begin{proof}
The following proof is inspired by the proof of the PBW-basis of the Hall algebra of a module category, which is given in \cite[Theorem 3.1]{GuoP}.

For any $\alpha,\beta\in K(\A),[A_0],[B_1]\in\Iso(\A)$, define \begin{flalign*}\delta(K_\alpha K_\beta^\ast u_{A_0\oplus B_1[1]})&=\dim_k\Ext_{\mathcal {R}(\A)}^1(\widetilde{A}_0\oplus \widetilde{B}_1[1],\widetilde{A}_0\oplus \widetilde{B}_1[1])\\&=\dim_k\Ext_{\mathcal {R}(\A)}^1(\widetilde{A}_0[1]\oplus \widetilde{B}_1,\widetilde{A}_0[1]\oplus \widetilde{B}_1).\end{flalign*}

If $\delta(K_\alpha K_\beta^\ast u_{A_0\oplus B_1[1]})=0$, by (\ref{tonggou}), we obtain that $\Hom_{\A}(A_0,B_1)=0$. Thus, $$K_\alpha K_\beta^\ast u_{A_0}u_{B_1[1]}=K_\alpha K_\beta^\ast u_{A_0\oplus B_1[1]}.$$

If $\delta(K_\alpha K_\beta^\ast u_{A_0\oplus B_1[1]})>0$, we have that
\begin{equation}\label{basis}
\begin{split}
&K_\alpha K_\beta^\ast u_{A_0}u_{B_1[1]}=\\&K_\alpha K_\beta^\ast u_{A_0\oplus B_1[1]}+\sum\limits_{[I_1]\neq[0],[M_0],[M_1]}v^{\lr{\hat{I}_1,\hat{A}_0-\hat{B}_1}}\frac{H_{A_0,I_1[-1]}^{M_0}H_{I_1[1],B_1}^{M_1}}{a_{I_1}}K_\alpha K_{\beta+\hat{I}_1}^\ast u_{M_0\oplus M_1[1]}.\end{split}
\end{equation}
For any $[I_1],[M_0],[M_1]\in\Iso(\A)$, if $H_{A_0,I_1[-1]}^{M_0}H_{I_1[1],B_1}^{M_1}\neq0$, then we have an exact sequence in $\A$
$$\xymatrix@R=0.8pc{0\ar[r]&M_0\ar[r]&A_0\ar[rr]^-{f}\ar@{->>}[rd]&&B_1\ar[r]&M_1\ar[r]&0.\\
&&&I_1\ar@{>->}[ru]&&&}$$
Thus, we obtain a triangle in $D^b(\A)$
$$\xymatrix{A_0\ar[r]^-f&B_1\ar[r]&M_0[1]\oplus M_1\ar[r]&A_0[1],}$$ which gives a triangle in $\mathcal {R}(\A)$
\begin{equation}\label{kelie}
\xymatrix{\widetilde{A}_0\ar[r]^-{\widetilde{f}}&\widetilde{B}_1\ar[r]&\widetilde{M}_0[1]\oplus \widetilde{M}_1\ar[r]&\widetilde{A}_0[1].}\end{equation}
By \cite[Lemma 4.2]{RuanZ}, we have that $$\dim_k\Ext_{\mathcal {R}(\A)}^1(\widetilde{M}_0[1]\oplus \widetilde{M}_1,\widetilde{M}_0[1]\oplus \widetilde{M}_1)\leq\dim_k\Ext_{\mathcal {R}(\A)}^1(\widetilde{A}_0[1]\oplus \widetilde{B}_1,\widetilde{A}_0[1]\oplus \widetilde{B}_1)$$
and the equality holds if and only if the triangle (\ref{kelie}) splits if and only if $\widetilde{f}=0$, which is also equivalent to $f=0$, since $\A$ is fully embedded into $\mathcal {R}(\A)$.

Hence, for each term $K_\alpha K_{\beta+\hat{I}_1}^\ast u_{M_0\oplus M_1[1]}$ with $[I_1]\neq[0]$ in (\ref{basis}), we have that $$\delta(K_\alpha K_{\beta+\hat{I}_1}^\ast u_{M_0\oplus M_1[1]})<\delta(K_\alpha K_\beta^\ast u_{A_0\oplus B_1[1]}).$$ Since $\{K_\alpha K_\beta^\ast u_{A_0\oplus B_1[1]}~|~\alpha,\beta\in K(\A),[A_0],[B_1]\in\Iso(\A)\}$ is a basis of $\mathcal {D}\mathcal {H}_2(\A)$, according to (\ref{basis}), we obtain that $\mathfrak{B}:=\{K_\alpha K_\beta^\ast u_{A_0}u_{B_1[1]}~|~\alpha,\beta\in K(\A),[A_0],[B_1]\in\Iso(\A)\}$ is a linear independent set. By induction on $\delta(K_\alpha K_\beta^\ast u_{A_0\oplus B_1[1]})$ for all $\alpha,\beta\in K(\A),[A_0],[B_1]\in\Iso(\A)$, we get that $K_\alpha K_\beta^\ast u_{A_0\oplus B_1[1]}$ can be written as a linear combination of elements in $\mathfrak{B}$. Therefore, $\mathfrak{B}$ is a basis of $\mathcal {D}\mathcal {H}_2(\A)$.
\end{proof}
\begin{corollary}\label{twocopy}
The multiplication map $\mu: a\otimes b\mapsto i^+(a)i^-(b)$ defines an isomorphism of vector spaces
$$\mu:{\mathcal{H}}_{\rm{tw}}^{\rm e}(\mathcal{A})\otimes {\mathcal{H}}_{\rm{tw}}^{\rm e}(\mathcal{A})\longrightarrow\mathcal {D}\mathcal {H}_2(\A).$$
\end{corollary}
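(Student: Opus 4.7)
The plan is to exhibit compatible bases on each side and verify that $\mu$ intertwines them by an invertible diagonal transformation. For the tensor product on the left, the natural basis is
$\{u_{[A_0]}K_\alpha \otimes u_{[B_1]}K_\beta \mid [A_0],[B_1]\in\Iso(\A),\ \alpha,\beta\in K(\A)\}.$
For the right-hand side, Proposition \ref{sanjiaoji} has just furnished the triangular basis $\mathfrak{B}=\{K_\alpha K_\beta^\ast u_{A_0}u_{B_1[1]}\}$ parametrised by the same index set, so it suffices to show $\mu$ sends the former basis to a rescaling of the latter. This is the whole point of having proved the triangular basis result first.

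I would then unfold the definition and the embeddings of Lemma \ref{embeds}: $\mu(u_{[A_0]}K_\alpha \otimes u_{[B_1]}K_\beta) = (u_{A_0}K_\alpha)(u_{B_1[1]}K_\beta^\ast)$. Applying the commutation relations in Remark \ref{dyjx} (namely $u_{A_0}K_\alpha = v^{-(\alpha,\hat{A}_0)}K_\alpha u_{A_0}$, $u_{A_0}K_\beta^\ast = v^{(\beta,\hat{A}_0)}K_\beta^\ast u_{A_0}$, $u_{B_1[1]}K_\beta^\ast = v^{-(\beta,\hat{B}_1)}K_\beta^\ast u_{B_1[1]}$, together with $K_\alpha K_\beta^\ast = K_\beta^\ast K_\alpha$), one pushes $K_\alpha$ and then $K_\beta^\ast$ to the leftmost position at the cost of a single overall nonzero scalar of the form $v^{c(\alpha,\beta,\hat{A}_0,\hat{B}_1)}$ (explicitly $c=-(\alpha,\hat{A}_0)+(\beta,\hat{A}_0-\hat{B}_1)$, but what matters is only that it is invertible). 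This rewrites
$\mu(u_{[A_0]}K_\alpha \otimes u_{[B_1]}K_\beta) = v^{c}\, K_\alpha K_\beta^\ast u_{A_0}u_{B_1[1]},$
so each tensor basis vector maps to a nonzero scalar multiple of a distinct element of $\mathfrak{B}$.

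Consequently the matrix of $\mu$ with respect to these two bases is diagonal with invertible entries, and $\mu$ is therefore an isomorphism of vector spaces. The substantive content has already been absorbed in Proposition \ref{sanjiaoji}; the only remaining obstacle is the careful bookkeeping of the scalar $v^c$ via the commutation rules, and there is no genuine difficulty beyond that.
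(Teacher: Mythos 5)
Your proof is correct and is exactly the argument the paper intends: the corollary is stated without proof as an immediate consequence of Proposition \ref{sanjiaoji}, and the content is precisely that $\mu$ sends the tensor basis $\{u_{[A_0]}K_\alpha\otimes u_{[B_1]}K_\beta\}$ to nonzero scalar multiples ($v^{c}$ with $c=-(\alpha,\hat{A}_0)+(\beta,\hat{A}_0-\hat{B}_1)$, computed via Remark \ref{dyjx} and the $K(\A)$-grading) of the triangular basis elements $K_\alpha K_\beta^\ast u_{A_0}u_{B_1[1]}$. Your scalar bookkeeping checks out, so there is nothing to add.
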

\begin{theorem}\label{tgdly}
The $2$-periodic extended derived Hall algebra $\mathcal {D}\mathcal {H}_2(\A)$ is isomorphic to the Drinfeld double Hall algebra $\mathcal {D}(\A)$.
\end{theorem}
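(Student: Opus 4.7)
The plan is to construct an algebra homomorphism
$$\Phi: \mathcal{D}(\mathcal{A}) \longrightarrow \mathcal{D}\mathcal{H}_2(\mathcal{A})$$
that restricts to $i^+$ on the first copy and $i^-$ on the second copy of $\mathcal{H}_{\rm tw}^{\rm e}(\mathcal{A})$, and then to conclude bijectivity from the triangular basis of Proposition \ref{sanjiaoji} together with Corollary \ref{twocopy}.

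First, by Lemma \ref{embeds} and the universal property of the free product, the maps $i^{\pm}$ combine into an algebra homomorphism
$$\widetilde{\Phi}: \mathcal{H}_{\rm tw}^{\rm e}(\mathcal{A}) \ast \mathcal{H}_{\rm tw}^{\rm e}(\mathcal{A}) \longrightarrow \mathcal{D}\mathcal{H}_2(\mathcal{A}).$$
To descend $\widetilde{\Phi}$ to $\mathcal{D}(\mathcal{A})$, the key point is to verify that $\widetilde{\Phi}$ annihilates the Drinfeld commutator relation (\ref{Drinfeld}). Since (\ref{Drinfeld}) is bilinear in $a, b$ and the $K$- and $K^{\ast}$-element commutations in $\mathcal{D}\mathcal{H}_2(\mathcal{A})$ already match those in $\mathcal{H}_{\rm tw}^{\rm e}(\mathcal{A})$ (see Remark \ref{dyjx}), it suffices to check the relation for generators $a = u_{[M]}$, $b = u_{[N]}$ with $M, N \in \mathcal{A}$. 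Using Green's comultiplication formula together with the Hopf pairing $\varphi(u_{[B]}, u_{[C]}) = \delta_{[B],[C]}\,a_B$, this reduces to a single identity comparing $i^+(u_{[M]}) i^-(u_{[N]}) = u_M \cdot u_{N[1]}$ with $i^-(u_{[N]}) i^+(u_{[M]}) = u_{N[1]} \cdot u_M$ in $\mathcal{D}\mathcal{H}_2(\mathcal{A})$, weighted by sums against $\Delta(u_{[M]})$ and $\Delta(u_{[N]})$.

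The main technical step is to expand both mixed products via Definition \ref{maindef}: each yields a sum indexed by $[I_0], [I_1] \in \Iso(\mathcal{A})$ of products of derived Hall numbers $H^{?}_{?,?}$. Using Corollary \ref{tuilun2.7} and Corollary \ref{tuilun1}, one converts these into sums of classical Hall numbers $g^{?}_{?,?}$, after which both sides of the desired commutator re-express in terms of Hall-number triples coming from $\Delta(u_{[M]})$ and $\Delta(u_{[N]})$. The expected main obstacle is the bookkeeping of $v$-exponents: one must reconcile the prefactor $a_0$ from Definition \ref{maindef}, the Euler forms produced by the various $K$- and $K^\ast$-commutations, and the symmetric Euler forms arising from $\varphi$. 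Green's formula is once again the tool that matches the two sides, so the combinatorial skeleton is parallel to (but lighter than) the associativity computation above.

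Once $\widetilde{\Phi}$ descends to $\Phi: \mathcal{D}(\mathcal{A}) \to \mathcal{D}\mathcal{H}_2(\mathcal{A})$, bijectivity follows formally. The standard triangular decomposition of the Drinfeld double identifies the underlying vector space of $\mathcal{D}(\mathcal{A})$ with $\mathcal{H}_{\rm tw}^{\rm e}(\mathcal{A}) \otimes \mathcal{H}_{\rm tw}^{\rm e}(\mathcal{A})$, and under this identification $\Phi$ is exactly the multiplication map $\mu$ of Corollary \ref{twocopy}, already known to be a linear isomorphism. Hence $\Phi$ is an isomorphism of algebras.
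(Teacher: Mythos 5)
Your proposal follows essentially the same route as the paper: reduce to verifying the Drinfeld commutator relation for generators via the embeddings $i^{\pm}$ of Lemma \ref{embeds}, carry out the verification by expanding the mixed products with Definition \ref{maindef} and Corollaries \ref{tuilun2.7} and \ref{tuilun1}, and deduce bijectivity from Corollary \ref{twocopy}. The only small inaccuracy is in your forecast of the closing step: the paper matches the two sides of the commutator relation using the associativity formula (\ref{associativity}) for derived Hall numbers rather than Green's formula (which is only needed for the associativity theorem), but this does not change the argument.
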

\begin{proof}
By Lemma \ref{embeds} and Corollary \ref{twocopy},
we only need to prove that the commutator relation
\begin{equation}\label{Drinfeld2}
\sum\varphi(a_{(2)},b_{(1)})i^+(a_{(1)}) i^-(b_{(2)})=\sum\varphi(a_{(1)},b_{(2)})i^-(b_{(1)}) i^+(a_{(2)})\end{equation} holds in $\mathcal {D}\mathcal {H}_2(\mathcal{A})$ for each $a=u_{[X_0]}K_{\alpha}$ and $b=u_{[X_1]}K_{\beta}$, where $\alpha,\beta\in K(\A)$ and $[X_0],[X_1]\in\Iso(\A)$.

Firstly, we prove the case that $a=u_{[X_0]}$ and $b=u_{[X_1]}$.
Since $$\Delta(u_{X_0})=\sum\limits_{[A_0],[B_0]}v^{\lr{\hat{A}_0,\hat{B}_0}}F_{A_0,B_0}^{X_0}u_{A_0}K_{\hat{B}_0}\otimes u_{B_0}$$ and
$$\Delta(u_{X_1})=\sum\limits_{[A_1],[B_1]}v^{\lr{\hat{A}_1,\hat{B}_1}}F_{A_1,B_1}^{X_1}u_{A_1}K_{\hat{B}_1}\otimes u_{B_1},$$
the left hand side of $(\ref{Drinfeld2})$ becomes
\begin{equation*}\begin{split}
\mbox{LHS~~of}~~(\ref{Drinfeld2})&=\sum\limits_{[A_0],[B_0],[A_1],[B_1]}v^{\lr{\hat{A}_0,\hat{B}_0}+\lr{\hat{A}_1,\hat{B}_1}}F_{A_0,B_0}^{X_0}
F_{A_1,B_1}^{X_1}\varphi(u_{B_0},u_{A_1}K_{\hat{B}_1})u_{A_0}K_{\hat{B}_0}u_{{B_1}[1]}\\
&=\sum\limits_{[A_0],[B_0],[B_1]}v^{\lr{\hat{A}_0,\hat{B}_0}+\lr{\hat{B}_0,\hat{B}_1}}F_{A_0,B_0}^{X_0}
F_{B_0,B_1}^{X_1}a_{B_0}u_{A_0}K_{\hat{B}_0}u_{{B_1}[1]}\\
&=\sum\limits_{[A_0],[B_0],[B_1]}v^{\lr{\hat{B}_0,\hat{B}_1}-\lr{\hat{B}_0,\hat{A}_0}}F_{A_0,B_0}^{X_0}
F_{B_0,B_1}^{X_1}a_{B_0}K_{\hat{B}_0}u_{A_0}u_{{B_1}[1]}.
\end{split}\end{equation*}
By Definition \ref{maindef}, we obtain that
\begin{equation*}\begin{split}
&\mbox{LHS~~of}~~(\ref{Drinfeld2})=\\&\sum\limits_{[A_0],[B_0],[B_1]\atop [I_1],[M_0],[M_1]}v^{\lr{\hat{B}_0,\hat{B}_1}-\lr{\hat{B}_0,\hat{A}_0}+\lr{\hat{I}_1,\hat{A}_0-\hat{B}_1}}F_{A_0,B_0}^{X_0}
F_{B_0,B_1}^{X_1}a_{B_0}\frac{H_{A_0,I_1[-1]}^{M_0}H_{I_1[1],B_1}^{M_1}}{a_{I_1}}K_{\hat{B}_0}K_{\hat{I}_1}^\ast u_{M_0\oplus M_1[1]}.
\end{split}\end{equation*}
By Corollary \ref{tuilun2.7} and Lemma \ref{diyiy}, we have that $$H_{A_0,I_1[-1]}^{M_0}=a_{I_1}F_{I_1,M_0}^{A_0}~\text{and}~H_{I_1[1],B_1}^{M_1}=a_{I_1}F_{M_1,I_1}^{B_1}.$$
Thus, we obtain that
\begin{equation*}\begin{split}
&\mbox{LHS~~of}~~(\ref{Drinfeld2})=\\&\sum\limits_{[A_0],[B_0],[B_1]\atop [I_1],[M_0],[M_1]}v^{\lr{\hat{B}_0,\hat{B}_1}-\lr{\hat{B}_0,\hat{A}_0}+\lr{\hat{I}_1,\hat{A}_0-\hat{B}_1}}F_{I_1,M_0}^{A_0}F_{A_0,B_0}^{X_0}
F_{B_0,B_1}^{X_1}F_{M_1,I_1}^{B_1}a_{B_0}a_{I_1}K_{\hat{B}_0}K_{\hat{I}_1}^\ast u_{M_0\oplus M_1[1]}.
\end{split}\end{equation*}
Applying the associativity formula (\ref{associativity}) together with $\hat{A}_0=\hat{M}_0+\hat{I}_1$ and $\hat{B}_1=\hat{M}_1+\hat{I}_1$, we obtain that
\begin{equation*}
\begin{split}
&\mbox{LHS~~of}~~(\ref{Drinfeld2})=\\&\sum\limits_{[B_0],[I_1],[M_0],[M_1]}v^{\lr{\hat{B}_0,\hat{M}_1+\hat{I}_1}-\lr{\hat{B}_0,\hat{M}_0+\hat{I}_1}+\lr{\hat{I}_1,\hat{M}_0-\hat{M}_1}}F_{I_1,M_0,B_0}^{X_0}
F_{B_0,M_1,I_1}^{X_1}a_{B_0}a_{I_1}K_{\hat{B}_0}K_{\hat{I}_1}^\ast u_{M_0\oplus M_1[1]}\end{split}\end{equation*}
\begin{equation}\label{4.3zuo}
\begin{split}=\sum\limits_{[B_0],[I_1],[M_0],[M_1]}v^{\lr{\hat{I}_1-\hat{B}_0,\hat{M}_0-\hat{M}_1}}F_{I_1,M_0,B_0}^{X_0}
F_{B_0,M_1,I_1}^{X_1}a_{B_0}a_{I_1}K_{\hat{B}_0}K_{\hat{I}_1}^\ast u_{M_0\oplus M_1[1]}.
\end{split}\end{equation}

On the other hand, the right hand side of $(\ref{Drinfeld2})$ becomes
\begin{equation*}\begin{split}
\mbox{RHS~~of}~~(\ref{Drinfeld2})&=\sum\limits_{[A_0],[B_0],[A_1],[B_1]}v^{\lr{\hat{A}_0,\hat{B}_0}+\lr{\hat{A}_1,\hat{B}_1}}F_{A_0,B_0}^{X_0}
F_{A_1,B_1}^{X_1}\varphi(u_{A_0}K_{\hat{B}_0},u_{B_1})u_{A_1[1]}K_{\hat{B}_1}^\ast u_{{B_0}}\\
&=\sum\limits_{[A_0],[B_0],[A_1]}v^{\lr{\hat{A}_0,\hat{B}_0}+\lr{\hat{A}_1,\hat{A}_0}-(\hat{A}_1,\hat{A}_0)}F_{A_0,B_0}^{X_0}
F_{A_1,A_0}^{X_1}a_{A_0}K_{\hat{A}_0}^\ast u_{A_1[1]}u_{{B_0}}.
\end{split}\end{equation*}
By Definition \ref{maindef}, we obtain that
\begin{equation*}\begin{split}
&\mbox{RHS~~of}~~(\ref{Drinfeld2})=\\&=\sum\limits_{[A_0],[B_0],[A_1]\atop [I_0],[M_0],[M_1]}v^{\lr{\hat{A}_0,\hat{B}_0}-\lr{\hat{A}_0,\hat{A}_1}+\lr{\hat{I}_0,\hat{A}_1-\hat{B}_0}}F_{A_0,B_0}^{X_0}
F_{A_1,A_0}^{X_1}a_{A_0}\frac{H_{I_0[1],B_0}^{M_0}H_{A_1,I_0[-1]}^{M_1}}{a_{I_0}}K_{\hat{A}_0}^\ast K_{\hat{I}_0} u_{M_0\oplus M_1[1]}.
\end{split}\end{equation*}
By Corollary \ref{tuilun2.7} and Lemma \ref{diyiy}, we have that $$H_{I_0[1],B_0}^{M_0}=a_{I_0}F_{M_0,I_0}^{B_0}~\text{and}~H_{A_1,I_0[-1]}^{M_1}=a_{I_0}F_{I_0,M_1}^{A_1}.$$
Thus, we get that
\begin{equation*}\begin{split}
&\mbox{RHS~~of}~~(\ref{Drinfeld2})=\\&\sum\limits_{[A_0],[B_0],[A_1]\atop [I_0],[M_0],[M_1]}v^{\lr{\hat{I}_0-\hat{A}_0,\hat{A}_1-\hat{B}_0}}F_{A_0,B_0}^{X_0}F_{M_0,I_0}^{B_0}
F_{I_0,M_1}^{A_1}F_{A_1,A_0}^{X_1}a_{A_0}a_{I_0}K_{\hat{I}_0}K_{\hat{A}_0}^\ast u_{M_0\oplus M_1[1]}.
\end{split}\end{equation*}
Applying the associativity formula (\ref{associativity}) together with $\hat{A}_1=\hat{M}_1+\hat{I}_0$ and $\hat{B}_0=\hat{M}_0+\hat{I}_0$, we obtain that
\begin{equation}\label{4.3you}
\begin{split}
\mbox{RHS~~of}~~(\ref{Drinfeld2})=\sum\limits_{[A_0],[I_0],[M_0],[M_1]}v^{\lr{\hat{A}_0-\hat{I}_0,\hat{M}_0-\hat{M}_1}}F_{A_0,M_0,I_0}^{X_0}
F_{I_0,M_1,A_0}^{X_1}a_{A_0}a_{I_0}K_{\hat{I}_0}K_{\hat{A}_0}^\ast u_{M_0\oplus M_1[1]}.
\end{split}\end{equation}
Replacing the notations $[I_1]$ and $[B_0]$ in $(\ref{4.3zuo})$ by $[A_0]$ and $[I_0]$ in $(\ref{4.3you})$, respectively, we obtain that
$\mbox{LHS~~of}~~(\ref{Drinfeld2})=\mbox{RHS~~of}~~(\ref{Drinfeld2})$ in this case.

Now we prove the general case that $a=u_{[X_0]}K_\alpha$ and $b=u_{[X_1]}K_\beta$.
At this point, the left hand side of $(\ref{Drinfeld2})$ becomes
\begin{equation*}\begin{split}
&\mbox{LHS~~of}~~(\ref{Drinfeld2})=\\&\sum\limits_{[A_0],[B_0],[A_1],[B_1]}v^{\lr{\hat{A}_0,\hat{B}_0}+\lr{\hat{A}_1,\hat{B}_1}}F_{A_0,B_0}^{X_0}
F_{A_1,B_1}^{X_1}\varphi(u_{B_0}K_\alpha,u_{A_1}K_{\hat{B}_1+\beta})u_{A_0}K_{\hat{B}_0+\alpha}u_{{B_1}[1]}K_\beta^*\\
&=v^{(\alpha,\beta)}\sum\limits_{[A_0],[B_0],[A_1],[B_1]}v^{\lr{\hat{A}_0,\hat{B}_0}+\lr{\hat{A}_1,\hat{B}_1}}F_{A_0,B_0}^{X_0}
F_{A_1,B_1}^{X_1}\varphi(u_{B_0},u_{A_1}K_{\hat{B}_1})u_{A_0}K_{\hat{B}_0}u_{{B_1}[1]}K_\alpha K_\beta^*
\end{split}\end{equation*}
and the right hand side of $(\ref{Drinfeld2})$ becomes
\begin{equation*}\begin{split}
&\mbox{RHS~~of}~~(\ref{Drinfeld2})=\\&\sum\limits_{[A_0],[B_0],[A_1],[B_1]}v^{\lr{\hat{A}_0,\hat{B}_0}+\lr{\hat{A}_1,\hat{B}_1}}F_{A_0,B_0}^{X_0}
F_{A_1,B_1}^{X_1}\varphi(u_{A_0}K_{\hat{B}_0+\alpha},u_{B_1}K_\beta)u_{A_1[1]}K_{\hat{B}_1+\beta}^\ast u_{{B_0}}K_\alpha\\
&=v^{(\alpha,\beta)}\sum\limits_{[A_0],[B_0],[A_1],[B_1]}v^{\lr{\hat{A}_0,\hat{B}_0}+\lr{\hat{A}_1,\hat{B}_1}}F_{A_0,B_0}^{X_0}
F_{A_1,B_1}^{X_1}\varphi(u_{A_0}K_{\hat{B}_0},u_{B_1})u_{A_1[1]}K_{\hat{B}_1}^\ast u_{{B_0}}K_\alpha K_\beta^*.
\end{split}\end{equation*}
Thus, in this case, $\mbox{LHS~~of}~~(\ref{Drinfeld2})=\mbox{RHS~~of}~~(\ref{Drinfeld2})$ is the same as the first case as long as we eliminate $v^{(\alpha,\beta)}K_\alpha K_\beta^*$ on both sides. Therefore, we complete the proof.
\end{proof}
\begin{remark}
$(1)$~In Theorem \ref{tgdly},
if ${\mathcal{H}}_{\rm{tw}}^{\rm e}(\mathcal{A})$ is a topological bialgebra, we need to complete the Hall algebra $\mathcal {D}\mathcal {H}_2(\A)$
with respect to the triangular basis, i.e. the sums in the commutator relation (\ref{Drinfeld2}) allow to be infinite.

$(2)$~By \cite{Cramer}, the Drinfeld double Hall algebras are invariant up to derived equivalences. Thus, by Theorem \ref{tgdly}, the $2$-periodic extended derived Hall algebras are also invariant up to derived equivalences. For any two hereditary abelian categories $\A$ and $\mathcal {B}$, one conjectures that their root categories are triangulated equivalent if and only if they are derived equivalent. Hence, the $2$-periodic extended derived Hall algebras may also be called the {\em extended derived Hall algebra of the root category} $\mathcal {R}(\A)$.

$(3)$~Recently, for any hereditary abelian category $\A$ with Euler form skew symmetric, Chen, Lu and Ruan \cite{CLR2} have also defined a Hall algebra for the root category $\mathcal {R}(\A)$ by counting the triangles in the root category. In fact, if the Euler form of $\A$ is skew symmetric, i.e. $(\alpha,\beta)=0$~for any $\alpha,\beta\in K(\A)$, by Remark \ref{dyjx} we obtain that the elements $K_\alpha K_\beta^\ast$ with any $\alpha,\beta\in K(\A)$ are central in $\mathcal {D}\mathcal {H}_2(\A)$. In this case, the twisted derived Hall algebra of $\mathcal {R}(\A)$ defined in \cite{CLR2} is isomorphic to the central reduction $\mathcal {D}\mathcal {H}_2(\A)/\lr{K_\alpha K_\beta^\ast-1~|~\alpha,\beta\in K(\A)}$ of the $2$-periodic extended derived Hall algebra.
\end{remark}

Similar to \cite{Br,LP}, the {\em reduced $2$-periodic extended derived Hall algebra} $\mathcal {D}\mathcal {H}_2^{\rm red}(\A)$ of $\A$ is defined to be the quotient
of $\mathcal {D}\mathcal {H}_2(\A)$ by the ideal generated by the elements $K_\alpha K_\alpha^*-1$ for all $\alpha\in K(\A)$. We recall that the {\em reduced Drinfeld double Hall algebra} $\mathcal {D}_{\rm red}(\A)$ of $\A$ is the quotient of $\mathcal {D}(\A)$ by the ideal generated by the elements $K_\alpha \otimes 1-1\otimes K_{-\alpha}$ for all $\alpha\in K(\A)$. By Theorem \ref{tgdly}, it is easy to see that the algebras $\mathcal {D}\mathcal {H}_2^{\rm red}(\A)$ and $\mathcal {D}_{\rm red}(\A)$ are isomorphic.
Hence, we can use the reduced $2$-periodic extended derived Hall algebra to provide a global realization of the corresponding quantum enveloping algebra.

We remark that our construction has the same generality as the work by Lu-Peng \cite{LP}, i.e. it also applies to hereditary abelian categories that may not have enough projectives. By Theorem \ref{tgdly} together with \cite[Theorem 1.27]{Yan} and \cite[Theorem 4.9]{LP}, the $2$-periodic extended derived Hall algebra $\mathcal {D}\mathcal {H}_2(\A)$ is isomorphic to Bridgeland's Hall algebra (cf. \cite{Br}) and the twisted semi-derived Ringel--Hall algebra (cf. \cite{LP}) of $2$-periodic complexes associated to $\A$. Hence, taking $\A$ to be the category of coherent sheaves on a weighted projective line $\mathbb{X}$, we can use $\mathcal {D}\mathcal {H}_2^{\rm red}(\A)$ to realize the whole quantum loop algebra $\mathbf{U}_v(\mathcal {L}\mathfrak{g})$ for some $\mathfrak{g}$ with a star-shaped Dynkin diagram (cf. \cite{BurS1,BurS2,DJX}); taking $\A$ to be the category of finite-dimensional nilpotent representations of a finite quiver $Q$ with loops, we can use $\mathcal {D}\mathcal {H}_2^{\rm red}(\A)$ to realize the corresponding whole quantum Borcherds--Bozec algebra (cf. \cite{Kang, Lu}).
\section{Appendix: 1-periodic derived Hall algebras}
In this section, we apply the derived Hall numbers of $D^b(\A)$ to define a Hall algebra for the $1$-periodic derived category $\mathcal {D}_1(\A)$, and then compare it with the derived Hall algebra of $\mathcal {D}_1(\A)$ defined in \cite{XuChen}.
\subsection{$1$-periodic derived Hall algebra $\mathcal {D}\mathcal {H}_1(\A)$}
\begin{definition}\label{maindef2}
The Hall algebra $\mathcal {D}\mathcal {H}_1(\A)$, called the {\em $1$-periodic derived Hall algebra} of $\A$, is the $\mathbb{C}$-vector space with the basis $\{u_{[M]}~|~[M]\in {\rm Iso}(\A)\}$, and with the multiplication defined on basis elements by
$$u_{[A]}u_{[B]}=v^{\lr{\hat{A},\hat{B}}}\sum\limits_{[I],[M]\in{\rm Iso}(\A)}\frac{H_{I[1]\oplus A,B\oplus I[-1]}^M}{a_I}u_{[M]}.$$
\end{definition}
\begin{theorem}
The $1$-periodic derived Hall algebra $\mathcal {D}\mathcal {H}_1(\A)$ is an associative algebra.
\end{theorem}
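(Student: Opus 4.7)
The plan is to mimic, in the simpler 1-periodic setting, the scheme used in the preceding section to establish associativity of $\mathcal {D}\mathcal {H}_2(\A)$. Since the multiplication in Definition \ref{maindef2} carries no $K$-elements, it suffices to verify that $(u_{[A]}u_{[B]})u_{[C]} = u_{[A]}(u_{[B]}u_{[C]})$ holds for arbitrary $[A],[B],[C]\in \mathrm{Iso}(\A)$.

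First I would expand each side by applying Definition \ref{maindef2} twice. On the left this yields a quadruple sum indexed by $[I_1],[I_2],[X],[M]$ whose summand is a prefactor $v^{\lr{\hat{A},\hat{B}}+\lr{\hat{X},\hat{C}}}$ times the product $H_{I_1[1]\oplus A,\, B\oplus I_1[-1]}^{X}\,H_{I_2[1]\oplus X,\, C\oplus I_2[-1]}^{M}$ divided by $a_{I_1}a_{I_2}$, with an analogous expansion on the right involving new auxiliary variables $[J_1],[J_2],[Y]$ and a prefactor $v^{\lr{\hat{B},\hat{C}}+\lr{\hat{A},\hat{Y}}}$.

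Next, I would convert every $H$-number into a derived Hall number $F$ via Corollary \ref{tuilun2.7}, so that the entire computation takes place inside $\mathcal {D}\mathcal {H}(\A)$ and only explicit powers of $q$, together with combinatorial $a$-factors, remain. Using Corollary \ref{tuilun1} I would then break each resulting four-argument $F$-number into a sum of products of two-argument $F$-numbers with auxiliary summation indices. Green's formula next collapses the internal sum over $X$ on the left (resp.\ over $Y$ on the right), producing identical four-fold sums; and the associativity identity (\ref{associativity}) of $\mathcal {D}\mathcal {H}(\A)$ repackages the resulting multi-factor $F$-numbers into three-argument ones indexed directly by $A,B,C,M$ and the surviving auxiliary objects. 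A relabelling of summation indices, mirroring the substitution used at the end of the proof of associativity for $\mathcal {D}\mathcal {H}_2(\A)$, then matches every $F$- and $a$-factor on the two sides term by term.

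The only nontrivial remaining step is the scalar bookkeeping: one must check that the total exponents of $v$ and $q$ coincide. Using that $\hat{X}=\hat{A}+\hat{B}-\hat{I}_1-\hat{I}_2$ on the left and $\hat{Y}=\hat{B}+\hat{C}-\hat{J}_1-\hat{J}_2$ on the right, together with the bilinearity of $\lr{-,-}$, both sides reduce after cancellation of common terms to the same homogeneous expression in the classes $\hat{A},\hat{B},\hat{C}$ and the auxiliary classes. The main obstacle is therefore this exponent tracking, but it should be considerably simpler than its 2-periodic counterpart: only a single ``grade'' of objects appears here, so the cross terms between the two slots $M_0$ and $M_1$ that complicated the previous proof are absent.
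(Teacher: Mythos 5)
Your overall strategy coincides with the paper's own proof: reduce associativity to an identity among products of $H$-numbers, convert each $H$-number to a four-argument derived Hall number via Corollary \ref{tuilun2.7}, split those into two-argument $F$-numbers via Corollary \ref{tuilun1}, collapse the inner sum over the intermediate object with Green's formula, repackage with the associativity formula (\ref{associativity}), and finish by relabelling indices and matching exponents. Two points, however, keep this from being a complete argument. First, your Grothendieck-class relation is wrong: in the $1$-periodic product a \emph{single} auxiliary object $I_1$ occurs in both arguments $I_1[1]\oplus A$ and $B\oplus I_1[-1]$, so $\hat{X}=\hat{A}+\hat{B}-2\hat{I}_1$ (and likewise $\hat{Y}=\hat{B}+\hat{C}-2\hat{J}_2$), not $\hat{A}+\hat{B}-\hat{I}_1-\hat{I}_2$; the index $I_2$ belongs to the second multiplication and has nothing to do with $X$. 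This relation is used at the very first step, to pull the common factor $v^{\lr{\hat{A},\hat{B}}+\lr{\hat{A},\hat{C}}+\lr{\hat{B},\hat{C}}}$ out of both sides and isolate the residual factors $q^{-\lr{\hat{I}_1,\hat{C}}}$ and $q^{-\lr{\hat{A},\hat{I}_2}}$; carried as written, it would make the two sides disagree. Second, the exponent verification is the only genuinely nontrivial content of the proof, and you have asserted rather than performed it: the paper's argument culminates in checking $x'=y'$, which requires not only the corrected relations above but also identities among the auxiliary classes produced by Green's formula (e.g.\ $\hat{B}-\hat{I}_2-\hat{T}'_2=\hat{T}'_1$). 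Your intuition that the check is simpler than the $2$-periodic one is right, but until it is actually carried out the proposal is a correct plan rather than a proof.
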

\begin{proof}
For any $A,B,C\in\A$, we need to prove that $(u_{[A]}u_{[B]})u_{[C]}=u_{[A]}(u_{[B]}u_{[C]})$.
On the one hand, by definition,
\begin{flalign*}
&(u_{[A]}u_{[B]})u_{[C]}=\sum\limits_{[I_1],[X],[I_2],[M]}v^{\lr{\hat{A},\hat{B}}}\frac{H_{I_1[1]\oplus A,B\oplus I_1[-1]}^X}{a_{I_1}}v^{\lr{\hat{X},\hat{C}}}\frac{H_{I_2[1]\oplus X,C\oplus I_2[-1]}^M}{a_{I_2}}u_{[M]}\\
&=\sum\limits_{[I_1],[X],[I_2],[M]}v^{\lr{\hat{A},\hat{B}}}\frac{H_{I_1[1]\oplus A,B\oplus I_1[-1]}^X}{a_{I_1}}v^{\lr{\hat{A}+\hat{B}-2\hat{I}_1,\hat{C}}}\frac{H_{I_2[1]\oplus X,C\oplus I_2[-1]}^M}{a_{I_2}}u_{[M]}\\
&=v^{\lr{\hat{A},\hat{B}}+\lr{\hat{A},\hat{C}}+\lr{\hat{B},\hat{C}}}\sum\limits_{[I_1],[X],[I_2],[M]}q^{-\lr{\hat{I}_1,\hat{C}}}\frac{H_{I_1[1]\oplus A,B\oplus I_1[-1]}^X}{a_{I_1}}\frac{H_{I_2[1]\oplus X,C\oplus I_2[-1]}^M}{a_{I_2}}u_{[M]}.
\end{flalign*}

On the other hand,
\begin{flalign*}
&u_{[A]}(u_{[B]}u_{[C]})=\sum\limits_{[I_2],[X],[I_1],[M]}v^{\lr{\hat{A},\hat{X}}}\frac{H_{I_1[1]\oplus A,X\oplus I_1[-1]}^M}{a_{I_1}}v^{\lr{\hat{B},\hat{C}}}\frac{H_{I_2[1]\oplus B,C\oplus I_2[-1]}^X}{a_{I_2}}u_{[M]}\\
&=\sum\limits_{[I_1],[X],[I_2],[M]}v^{\lr{\hat{A},\hat{B}+\hat{C}-2\hat{I}_2}}\frac{H_{I_1[1]\oplus A,X\oplus I_1[-1]}^M}{a_{I_1}}v^{\lr{\hat{B},\hat{C}}}\frac{H_{I_2[1]\oplus B,C\oplus I_2[-1]}^X}{a_{I_2}}u_{[M]}\\
&=v^{\lr{\hat{A},\hat{B}}+\lr{\hat{A},\hat{C}}+\lr{\hat{B},\hat{C}}}\sum\limits_{[I_1],[X],[I_2],[M]}q^{-\lr{\hat{A},\hat{I}_2}}\frac{H_{I_1[1]\oplus A,X\oplus I_1[-1]}^M}{a_{I_1}}\frac{H_{I_2[1]\oplus B,C\oplus I_2[-1]}^X}{a_{I_2}}u_{[M]}.
\end{flalign*}
Hence, the associativity law $(u_{[A]}u_{[B]})u_{[C]}=u_{[A]}(u_{[B]}u_{[C]})$ is equivalent to the following formula.\end{proof}
\begin{lemma}
For any $A,B,C,M\in\A$, we have that
\begin{equation}\label{1jhlgs}
\begin{split}
&\sum\limits_{[I_1],[X],[I_2]}q^{-\lr{\hat{I}_1,\hat{C}}}\frac{H_{I_1[1]\oplus A,B\oplus I_1[-1]}^X}{a_{I_1}}\frac{H_{I_2[1]\oplus X,C\oplus I_2[-1]}^M}{a_{I_2}}\\&=
\sum\limits_{[I_1],[X],[I_2]}q^{-\lr{\hat{A},\hat{I}_2}}\frac{H_{I_1[1]\oplus A,X\oplus I_1[-1]}^M}{a_{I_1}}\frac{H_{I_2[1]\oplus B,C\oplus I_2[-1]}^X}{a_{I_2}}.
\end{split}\end{equation}
\end{lemma}
\begin{proof}
On the one hand,
using Corollary \ref{tuilun2.7}, we have that
\begin{flalign*}&\mbox{LHS~~of}~~(\ref{1jhlgs})
=\sum\limits_{[I_1],[X],[I_2]}q^{-\lr{\hat{I}_1,\hat{C}}}q^a\frac{a_Aa_Ba_{I_1}}{a_X}F_{A,I_1[-1],I_1[1],B}^Xq^b\frac{a_Xa_Ca_{I_2}}{a_M}F_{X,I_2[-1],I_2[1],C}^M,
\end{flalign*}
where $a=\lr{\hat{I}_1,\hat{I}_1}-\lr{\hat{A},\hat{I}_1}-\lr{\hat{I}_1,\hat{B}}$ and $b=\lr{\hat{I}_2,\hat{I}_2}-\lr{\hat{X},\hat{I}_2}-\lr{\hat{I}_2,\hat{C}}=\lr{\hat{I}_2,\hat{I}_2}-\lr{\hat{A}+\hat{B}-2\hat{I}_1,\hat{I}_2}-\lr{\hat{I}_2,\hat{C}}$.

Using Corollary \ref{tuilun1}, we obtain that
\begin{flalign*}&\mbox{LHS~~of}~~(\ref{1jhlgs})\\
&=\sum\limits_{[I_1],[X],[I_2]\atop [P_1],[P_2],[P_3],[P_4]}q^{x}\frac{a_Aa_Ba_Ca_{I_1}a_{I_2}}{a_M}F_{I_1,P_2}^AF_{P_1,I_1}^BF_{P_2,P_1}^X\frac{a_{P_1}a_{P_2}}{a_Aa_B}
F_{I_2,P_4}^XF_{P_3,I_2}^CF_{P_4,P_3}^M\frac{a_{P_3}a_{P_4}}{a_Xa_C}\end{flalign*}
\begin{flalign*}=\sum\limits_{[I_1],[X],[I_2]\atop [P_1],[P_2],[P_3],[P_4]}q^{x}\frac{a_{I_1}a_{I_2}a_{P_1}a_{P_2}a_{P_3}a_{P_4}}{a_M}F_{I_1,P_2}^AF_{P_1,I_1}^BF_{P_3,I_2}^CF_{P_4,P_3}^M(F_{P_2,P_1}^X
F_{I_2,P_4}^X\frac{1}{a_X})
\end{flalign*}
where $x=a+b-\lr{\hat{I}_1,\hat{C}}$.

Using Green's formula, we get that
\begin{flalign*}&\mbox{LHS~~of}~~(\ref{1jhlgs})
=\sum\limits_{[I_1],[I_2], [P_1],[P_2],[P_3]\atop[P_4],[T_1],[T_2],[T_3],[T_4]}q^{ x'}\frac{a_{I_1}a_{I_2}a_{P_1}a_{P_2}a_{P_3}a_{P_4}}{a_M}F_{I_1,P_2}^AF_{P_1,I_1}^BF_{P_3,I_2}^CF_{P_4,P_3}^M\\
&\quad\quad\quad\quad\quad\quad\quad\quad\quad\quad\quad\quad F_{T_1,T_2}^{P_2}F_{T_3,T_4}^{P_1}F_{T_1,T_3}^{I_2}F_{T_2,T_4}^{P_4}\frac{a_{T_1}a_{T_2}a_{T_3}a_{T_4}}{a_{P_1}a_{P_2}a_{P_4}a_{I_2}},
\end{flalign*}
where $x'=x-\lr{\hat{T}_1,\hat{T}_4}$.

Noting that $\hat{I}_2=\hat{T}_1+\hat{T}_3$ and applying the associativity formula (\ref{associativity}), we obtain that
\begin{equation}\label{1zuobian}
\begin{split}
\mbox{LHS~~of}~~(\ref{1jhlgs})
=\sum\limits_{[I_1],[P_3],[T_1]\atop [T_2],[T_3],[T_4]}q^{x'}\frac{a_{I_1}a_{P_3}a_{T_1}a_{T_2}a_{T_3}a_{T_4}}{a_M}F_{I_1,T_1,T_2}^AF_{T_3,T_4,I_1}^BF_{P_3,T_1,T_3}^CF_{T_2,T_4,P_3}^M.
\end{split}\end{equation}

On the other hand, using Corollary \ref{tuilun2.7}, we have that
\begin{flalign*}\mbox{RHS~~of}~~(\ref{1jhlgs})=\sum\limits_{[I_1],[X],[I_2]}q^{-\lr{\hat{A},\hat{I}_2}}q^{b'}\frac{a_Aa_Xa_{I_1}}{a_M}F_{A,I_1[-1],I_1[1],X}^Mq^{a'}\frac{a_Ba_Ca_{I_2}}{a_X}F_{B,I_2[-1],I_2[1],C}^X,
\end{flalign*}
where $a'=\lr{\hat{I}_2,\hat{I}_2}-\lr{\hat{B},\hat{I}_2}-\lr{\hat{I}_2,\hat{C}}$ and $b'=\lr{\hat{I}_1,\hat{I}_1}-\lr{\hat{A},\hat{I}_1}-\lr{\hat{I}_1,\hat{B}+\hat{C}-2\hat{I}_2}$.

Using Corollary \ref{tuilun1}, we obtain that
\begin{flalign*}&\mbox{RHS~~of}~~(\ref{1jhlgs})\\
&=\sum\limits_{[I_1],[X],[I_2]\atop [Q_1],[Q_2],[Q_3],[Q_4]}q^{y}\frac{a_Aa_Ba_Ca_{I_1}a_{I_2}}{a_M}F_{I_2,Q_2}^BF_{Q_1,I_2}^CF_{Q_2,Q_1}^X\frac{a_{Q_1}a_{Q_2}}{a_Ba_C}
F_{I_1,Q_4}^AF_{Q_3,I_1}^XF_{Q_4,Q_3}^M\frac{a_{Q_3}a_{Q_4}}{a_Aa_X}\\&
=\sum\limits_{[I_1],[X],[I_2]\atop [Q_1],[Q_2],[Q_3],[Q_4]}q^{y}\frac{a_{I_1}a_{I_2}a_{Q_1}a_{Q_2}a_{Q_3}a_{Q_4}}{a_M}F_{I_1,Q_4}^AF_{I_2,Q_2}^BF_{Q_1,I_2}^CF_{Q_4,Q_3}^M(F_{Q_2,Q_1}^X
F_{Q_3,I_1}^X\frac{1}{a_X})
\end{flalign*}
where $y=a'+b'-\lr{\hat{A},\hat{I}_2}$.

Using Green's formula, we get that
\begin{flalign*}&\mbox{RHS~~of}~~(\ref{1jhlgs})=\sum\limits_{[I_1],[I_2], [Q_1],[Q_2],[Q_3]\atop[Q_4],[T'_1],[T'_2],[T'_3],[T'_4]}q^{y'}\frac{a_{I_1}a_{I_2}a_{Q_1}a_{Q_2}a_{Q_3}a_{Q_4}}{a_M}F_{I_1,Q_4}^AF_{I_2,Q_2}^BF_{Q_1,I_2}^CF_{Q_4,Q_3}^M\\
&\quad\quad\quad\quad\quad\quad\quad\quad\quad\quad\quad\quad F_{T'_1,T'_2}^{Q_2}F_{T'_3,T'_4}^{Q_1}F_{T'_1,T'_3}^{Q_3}F_{T'_2,T'_4}^{I_1}\frac{a_{T'_1}a_{T'_2}a_{T'_3}a_{T'_4}}{a_{Q_1}a_{Q_2}a_{Q_3}a_{I_1}},
\end{flalign*}
where $y'=y-\lr{\hat{T}'_1,\hat{T}'_4}$.

Noting that $\hat{I}_1=\hat{T}'_2+\hat{T}'_4$ and applying the associativity formula (\ref{associativity}), we obtain that
\begin{equation}\label{1youbian}
\begin{split}
\mbox{RHS~~of}~~(\ref{1jhlgs})=\sum\limits_{[I_2],[Q_4],[T'_1]\atop [T'_2],[T'_3],[T'_4]}q^{y'}\frac{a_{I_2}a_{Q_4}a_{T'_1}a_{T'_2}a_{T'_3}a_{T'_4}}{a_M}F_{T'_2,T'_4,Q_4}^AF_{I_2,T'_1,T'_2}^BF_{T'_3,T'_4,I_2}^CF_{Q_4,T'_1,T'_3}^M.
\end{split}\end{equation}

Replacing the notations $I_1,P_3,T_1,T_2,T_3,T_4$ in (\ref{1zuobian}) by $T'_2,T'_3,T'_4,Q_4,I_2,T'_1$, respectively,
we get that all terms in (\ref{1zuobian}) are the same as those in (\ref{1youbian}) except the exponents of $q$.

Now let us compare the exponents of $q$ in (\ref{1zuobian}) and (\ref{1youbian}). Replacing the notations in (\ref{1zuobian}) as above, we have that
\begin{equation}\label{lcommon1}
\begin{split}
x'=&\lr{\hat{T}'_2,\hat{T}'_2}-\lr{\hat{A},\hat{T}'_2}-\lr{\hat{T}'_2,\hat{B}}+\lr{\hat{T}'_4+\hat{I}_2,\hat{T}'_4+\hat{I}_2}-\lr{\hat{A}+\hat{B}-2\hat{T}'_2,\hat{T}'_4+\hat{I}_2}\\
&-\lr{\hat{T}'_4+\hat{I}_2,\hat{C}}-\lr{\hat{T}'_2,\hat{C}}-\lr{\hat{T}'_4,\hat{T}'_1}
\end{split}\end{equation}
and
\begin{equation}\label{rcommon1}
\begin{split}
y'=&\lr{\hat{I}_2,\hat{I}_2}-\lr{\hat{B},\hat{I}_2}-\lr{\hat{I}_2,\hat{C}}+\lr{\hat{T}'_2+\hat{T}'_4,\hat{T}'_2+\hat{T}'_4}-\lr{\hat{A},\hat{T}'_2+\hat{T}'_4}\\
&-\lr{\hat{T}'_2+\hat{T}'_4,\hat{B}+\hat{C}-2\hat{I}_2}-\lr{\hat{A},\hat{I}_2}-\lr{\hat{T}'_1,\hat{T}'_4}.
\end{split}\end{equation}
By direct calculations, we get that
\begin{equation*}
x'=\lr{\hat{I}_2,\hat{T}'_4}-\lr{\hat{B},\hat{T}'_4}+\lr{\hat{T}'_2,\hat{T}'_4}-\lr{\hat{T}'_4,\hat{T}'_1}+\text{Common~terms}~(\spadesuit)
\end{equation*}
and
\begin{equation*}
y'=\lr{\hat{T}'_4,\hat{T}'_2}-\lr{\hat{T}'_4,\hat{B}}+\lr{\hat{T}'_4,\hat{I}_2}-\lr{\hat{T}'_1,\hat{T}'_4}+\text{Common~terms}~(\spadesuit),
\end{equation*}
where $(\spadesuit)$ denotes the common terms of $(\ref{lcommon1})$ and $(\ref{rcommon1})$.

Noting that $\hat{B}-\hat{I}_2-\hat{T}'_2=\hat{T}'_1$, we can obtain that $x'=y'$.
Therefore, we complete the proof.
\end{proof}

\subsection{$1$-periodic derived Hall algebra $\mathcal {D}\mathcal {H}_{\mathbb{Z}_1}(\A)$}
The derived Hall algebras of odd periodic triangulated categories satisfying some finiteness conditions have been defined in \cite{XuChen}. In particular, the derived Hall algebra of the $1$-periodic derived category $\mathcal {D}_1(\A)$ has been defined. For any $A,B\in\mathcal {D}_1(\A)$, set $$[A,B]:=\frac{1}{|\Hom_{\mathcal {D}_1(\A)}(A,B)|}.$$

Note that $\Iso(\mathcal {D}_1(\A))=\Iso(\A)$. So for any $A\in\A$, in order to distinguish, we write $\tilde{a}_A=|\Aut_{\mathcal {D}_1(\A)}(A)|$
and ${a}_A=|\Aut_{\A}(A)|$, and we have that $\tilde{a}_A=a_A\cdot|\Ext_{\A}^1(A,A)|$ (cf. \cite[Lemma 3.1]{CLR}). According to \cite[Corollary 2.7]{XuChen}, for any $A,B,M\in\A$, we have that
$$\frac{|\Hom_{\mathcal {D}_1(\A)}(B,M)_A|}{\tilde{a}_B}\sqrt{\frac{[B,M]}{[B,B]}}=\frac{|\Hom_{\mathcal {D}_1(\A)}(M,A)_B|}{\tilde{a}_A}\sqrt{\frac{[M,A]}{[A,A]}}=:G_{A,B}^M.$$
By \cite[Proposition 3.5]{CLR},
\begin{equation}\label{1-Hallnumber}
G_{A,B}^M=\sum\limits_{[L],[I],[N]\in{\rm Iso}(\A)}v^{\lr{\hat{I},\hat{N}}+\lr{\hat{I},\hat{I}}+\lr{\hat{L},\hat{I}}-\lr{\hat{L},\hat{N}}}\frac{a_La_Ia_N}{a_Aa_B}F_{N,L}^MF_{I,N}^AF_{L,I}^B.
\end{equation}
The {\em $1$-periodic derived Hall algebra} $\mathcal {D}\mathcal {H}_{\mathbb{Z}_1}(\A)$ of $\A$ is the $\mathbb{C}$-vector space with the basis $\{\mu_{[X]}~|~[X]\in {\rm Iso}(\A)\}$, and with the multiplication defined by
\begin{equation}\label{odhallm}
\mu_{[A]} \mu_{[B]}=\sum\limits_{[M]\in {\rm Iso}(\A)}G_{A,B}^M\mu_{[M]}.\end{equation}
\begin{theorem}
There exists an algebra isomorphism
$$\Phi:\mathcal {D}\mathcal {H}_{1}(\A)\longrightarrow\mathcal {D}\mathcal {H}_{\mathbb{Z}_1}(\A), u_{[M]}\mapsto v^{-\lr{\hat{M},\hat{M}}}\cdot a_M\cdot \mu_{[M]}.$$
\end{theorem}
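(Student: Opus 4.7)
My plan is as follows. First, since $\Phi$ sends each basis vector $u_{[M]}$ of $\mathcal{D}\mathcal{H}_1(\A)$ to a nonzero scalar multiple of the corresponding basis vector $\mu_{[M]}$ of $\mathcal{D}\mathcal{H}_{\mathbb{Z}_1}(\A)$, it is automatically a bijective $\mathbb{C}$-linear map. The whole content of the theorem is therefore that $\Phi$ respects multiplication, i.e., that
\begin{equation*}
\Phi(u_{[A]}u_{[B]})=\Phi(u_{[A]})\Phi(u_{[B]})\quad\text{for all }[A],[B]\in\Iso(\A).
\end{equation*}
After applying the definitions of the two products and of $\Phi$, this reduces, for each fixed $[M]$, to the scalar identity
\begin{equation*}
v^{\lr{\hat A,\hat B}-\lr{\hat M,\hat M}}\,a_M\sum_{[I]}\frac{H_{I[1]\oplus A,\,B\oplus I[-1]}^{M}}{a_I}
=v^{-\lr{\hat A,\hat A}-\lr{\hat B,\hat B}}\,a_Aa_B\,G_{A,B}^{M}.
\end{equation*}

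Next I would unfold the left-hand side using the tools already proved. Applying Corollary \ref{tuilun2.7} followed by Corollary \ref{tuilun1} to $H_{A\oplus I[1],\,I[-1]\oplus B}^{M}$ expands the coefficient into
\begin{equation*}
H_{A\oplus I[1],\,I[-1]\oplus B}^{M}=q^{\,-\lr{\hat A,\hat I}-\lr{\hat I,\hat B}+\lr{\hat I,\hat I}}\,\frac{a_I^{\,2}}{a_M}\sum_{[L],[N]}a_L a_N\,F_{I,N}^{A}F_{L,I}^{B}F_{N,L}^{M}.
\end{equation*}
Substituting this turns the left-hand side into a triple sum over $[I],[L],[N]$ of the same shape as the right-hand side, which I would expand by plugging in the explicit formula (\ref{1-Hallnumber}) for $G_{A,B}^{M}$.

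At that point the two sides are identical except for the exponent of $v$, and the task is to verify that this exponent is the same for each surviving triple $([I],[L],[N])$. Here I would use that the nonvanishing of $F_{I,N}^{A}$, $F_{L,I}^{B}$, $F_{N,L}^{M}$ forces the Grothendieck-group relations
\begin{equation*}
\hat N=\hat A-\hat I,\qquad \hat L=\hat B-\hat I,\qquad \hat M=\hat A+\hat B-2\hat I,
\end{equation*}
and substitute these into both exponents. A direct bilinear expansion, using only the $\mathbb{Z}$-bilinearity of $\lr{\cdot,\cdot}$, then shows both sides reduce to
\begin{equation*}
-\lr{\hat A,\hat A}-\lr{\hat B,\hat B}-\lr{\hat B,\hat A}+2\lr{\hat I,\hat A}+2\lr{\hat B,\hat I}-2\lr{\hat I,\hat I},
\end{equation*}
completing the verification.

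The main obstacle is purely the bookkeeping in this last step: the exponent on the $H$-side carries a $\lr{\hat M,\hat M}$ term that must be expanded via $\hat M=\hat A+\hat B-2\hat I$ and then matched against the combination $\lr{\hat I,\hat N}+\lr{\hat I,\hat I}+\lr{\hat L,\hat I}-\lr{\hat L,\hat N}$ appearing in (\ref{1-Hallnumber}) after the substitutions. This is elementary but delicate; once done, associativity and well-definedness of $\Phi$ automatically give an algebra isomorphism.
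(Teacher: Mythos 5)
Your proposal is correct and follows essentially the same route as the paper's proof: reduce to multiplicativity on basis elements, expand the coefficient $H_{I[1]\oplus A,\,B\oplus I[-1]}^{M}$ via Corollaries \ref{tuilun2.7} and \ref{tuilun1} into a sum of products $F_{I,N}^{A}F_{L,I}^{B}F_{N,L}^{M}$, match against formula (\ref{1-Hallnumber}) for $G_{A,B}^{M}$, and verify the $v$-exponents agree using $\hat N=\hat A-\hat I$, $\hat L=\hat B-\hat I$, $\hat M=\hat A+\hat B-2\hat I$. Your intermediate expansion of $H_{A\oplus I[1],\,I[-1]\oplus B}^{M}$ and the common exponent $-\lr{\hat A,\hat A}-\lr{\hat B,\hat B}-\lr{\hat B,\hat A}+2\lr{\hat I,\hat A}+2\lr{\hat B,\hat I}-2\lr{\hat I,\hat I}$ both check out.
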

\begin{proof}
Since $\Phi$ sends a basis of $\mathcal {D}\mathcal {H}_{1}(\A)$ to a basis of $\mathcal {D}\mathcal {H}_{\mathbb{Z}_1}(\A)$, clearly, $\Phi$ is a bijection. We only need to prove that $\Phi(u_{[A]}u_{[B]})=\Phi(u_{[A]})\Phi(u_{[A]})$ for any $A,B\in\A$.

On the one hand,
\begin{flalign*}
&\Phi(u_{[A]}u_{[B]})=v^{\lr{\hat{A},\hat{B}}}\sum\limits_{[I],[M]\in{\rm Iso}(\A)}\frac{H_{I[1]\oplus A, B\oplus I[-1]}^M}{a_I}\Phi(u_{[M]})\\
&=v^{\lr{\hat{A},\hat{B}}}\sum\limits_{[I],[M]\in{\rm Iso}(\A)}q^{-\lr{\hat{A},\hat{I}}-\lr{\hat{I},\hat{B}}}{a_Aa_Ba_I}F_{A\oplus I[1],I[-1]\oplus B}^Mv^{-\lr{\hat{M},\hat{M}}}\mu_{[M]}\quad (\text{by~Corollary~\ref{tuilun2.7}}).
\end{flalign*}
Noting that \begin{flalign*}F_{A\oplus I[1],I[-1]\oplus B}^M&=q^{\lr{\hat{I},\hat{I}}}\sum\limits_{[L],[N]\in{\rm Iso}(\A)}F_{N,L}^MF_{I[1],B}^LF_{A,I[-1]}^N \quad (\text{by~Proposition~\ref{gongshi1})}\\
&=q^{\lr{\hat{I},\hat{I}}}\sum\limits_{[L],[N]\in{\rm Iso}(\A)}F_{N,L}^M\frac{a_La_N}{a_Aa_B}F_{I,N}^AF_{L,I}^B\quad (\text{by~Lemma~\ref{diyiy})}\end{flalign*}
and $\hat{M}=\hat{A}+\hat{B}-2\hat{I}$, we obtain that
\begin{flalign*}&\Phi(u_{[A]}u_{[B]})=\\
&v^{-\lr{\hat{A},\hat{A}}-\lr{\hat{B},\hat{B}}-\lr{\hat{B},\hat{A}}}\sum\limits_{[L],[I],[N],[M]\in{\rm Iso}(\A)}q^{\lr{\hat{I},\hat{A}}+\lr{\hat{B},\hat{I}}-\lr{\hat{I},\hat{I}}}a_La_Ia_NF_{N,L}^MF_{I,N}^AF_{L,I}^B\mu_{[M]}.
\end{flalign*}

On the other hand, by the equation (\ref{1-Hallnumber}), we have that
\begin{equation}\label{last}
\begin{split}
&\Phi(u_{[A]})\Phi(u_{[B]})=v^{-\lr{\hat{A},\hat{A}}}a_A\mu_{[A]}v^{-\lr{\hat{B},\hat{B}}}a_B\mu_{[B]}\\
&=v^{-\lr{\hat{A},\hat{A}}-\lr{\hat{B},\hat{B}}}\sum\limits_{[L],[I],[N],[M]\in{\rm Iso}(\A)}v^{\lr{\hat{I},\hat{N}}+\lr{\hat{I},\hat{I}}+\lr{\hat{L},\hat{I}}-\lr{\hat{L},\hat{N}}}{a_La_Ia_N}F_{N,L}^MF_{I,N}^AF_{L,I}^B\mu_{[M]}.\end{split}\end{equation}
Noting that $\hat{N}=\hat{A}-\hat{I}$ and $\hat{L}=\hat{B}-\hat{I}$ in (\ref{last}), we can easily complete the proof.
\end{proof}

\section*{Acknowledgments}
The authors would like to thank the anonymous referee for the careful reading, helpful comments and suggestions.
This work was partially supported by the National Natural Science Foundation of China (Grant No. 12271257).

\end{document}